\documentclass[a4paper,11pt]{article}

\usepackage{amssymb, amsmath, color}

\setlength{\headheight}{0cm} \setlength{\headsep}{0cm}
\setlength{\oddsidemargin}{0cm} \setlength{\evensidemargin}{0cm}
\setlength{\textheight}{22cm} \setlength{\textwidth}{16cm}

\def\C{\mathbb{C}}
\def\R{\mathbb{R}}
\def\N{\mathbb{N}}

\def\Q{\mathbb{Q}}
\def\Z{\mathbb{Z}}

\def\sg {{\rm sg}}

\newtheorem{definition}{Definition}

\newtheorem{lemma}[definition]{Lemma}
\newtheorem{proposition}[definition]{Proposition}
\newtheorem{theorem}[definition]{Theorem}
\newtheorem{corollary}[definition]{Corollary}
\newtheorem{remark}[definition]{Remark}
\newtheorem{notation}[definition]{Notation}

           {\vspace{3.3mm}
           \noindent{\bf #1}\it}%
           {\vspace{3.3mm}}

\newenvironment{proof}[1]{
  \trivlist \item[\hskip \labelsep{\it #1}]}{\hfill\mbox{$\square$}
  \endtrivlist}

\title{Zero counting for a class of univariate Pfaffian functions}
\author{Mar\'\i a Laura Barbagallo$^{\natural,\diamondsuit,}$\footnote{Partially supported by the following grants: PIP 099/11 CONICET and UBACYT 20020120100133 (2013/2016).}, Gabriela Jeronimo$^{{ \natural,\diamondsuit},*}$, Juan Sabia$^{{\flat, \diamondsuit}, *}$
\\[3mm]
{\small ${\natural}$ Departamento de Matem\'atica, FCEN, Universidad de Buenos Aires, Argentina}\\
{\small $\flat$ Departamento de Ciencias Exactas, CBC, Universidad de Buenos Aires, Argentina}\\
{\small ${\diamondsuit}$ IMAS, CONICET--UBA, Argentina}
}

\begin{document}

\maketitle

\begin{abstract}
We present a new procedure to count the number of real zeros of a class of univariate Pfaffian functions of order $1$. The procedure is based on the construction of Sturm sequences for these functions and relies on an oracle for sign determination. In the particular case of $E$-polynomials, we design an oracle-free effective algorithm solving this task within exponential complexity. In addition, we give an explicit upper bound for the absolute value of the real zeros of an $E$-polynomial.
\end{abstract}

Keywords: Pfaffian functions; zero counting; Sturm sequences; complexity.

\section{Introduction}

Pfaffian functions, introduced by Khovanskii in the late '70 (see \cite{Kho80}), are analytic functions that satisfy first order partial differential equation systems with polynomial coefficients.
A fundamental result proved by Khovanskii
(\cite{Kho91}) states that a system of  $n$ equations given by Pfaffian functions in $n$ variables defined on a domain $\Omega$ has finitely many non-degenerate solutions in $\Omega$, and this number can be bounded in terms of syntactic parameters associated to the system.

From the algorithmic viewpoint, \cite{GV04} presents a summary of quantitative and complexity  results for Pfaffian equation systems essentially based on Khovanskii's bound. The known elimination procedures in the Pfaffian structure rely on the use of an \emph{oracle} (namely, a blackbox subroutine which always gives the right answer)
to determine consistency for systems of equations and inequalities given by Pfaffian functions.
However, for some classes of Pfaffian functions the consistency problem is algorithmically decidable: for instance, an algorithm for the consistency problem of systems of the type $f_1(x) \ge 0,\dots, f_k(x)\ge 0, f_{k+1}(x)>0, \dots, f_l(x)>0$, where $x= (x_1,\dots, x_n)$, $f_i(x) = F_i(x, e^{h(x)})$ and
$F_i$ $(1\le i \le l)$ and $h$ are polynomials with integer coefficients, is given in \cite{Vor92}. This result allows the design of algorithms to solve classical related geometric problems (see, for example, \cite{RV94}).  More generally, the decidability of the theory of the real exponential field (i.e.~the theory of the structure $\R_{\mbox{exp}} = \langle \R; +, \cdot, -, 0, 1, \mbox{exp}, < \rangle$) was proved in \cite{MW96} provided Shanuel's conjecture is true.

In this paper, we design a symbolic procedure to count the exact number of zeros in a real interval of a univariate Pfaffian function of the type $f(x) = F(x,\varphi(x))$, where
$F$ is a polynomial in $\mathbb{Z}[X,Y]$ and $\varphi$ is a univariate Pfaffian function of order $1$ (see \cite[Definition 2.1]{GV04}). The procedure is based on the construction of a family of Sturm sequences associated to the given function $f(x)$, which is done by means of polynomial subresultant techniques (see, for instance, \cite{BPR}). As it is usual in the literature on the subject, we assume the existence of an oracle to determine the sign a Pfaffian function takes at a real algebraic number.
Sturm sequences in the context of transcendental functions were first used in \cite{Richardson91} to extend the cylindrical decomposition technique to non-algebraic situations. In \cite{Wolter93}, this approach was followed to count the number of real roots of exponential terms of the form $p(x) + q(x) e^{r(x)}$, where $p, q$ and $r$ are real polynomials. Later in \cite{Maignan98}, the same technique is applied to treat the case of functions of the type $F(x,e^x)$, where $F$ is an integer polynomial.

A function of the form
$$f(x) = F(x, e^{h(x)}),$$
where $F$ and $h$ are polynomials with real coefficients, is called an $E$-polynomial (\cite{Vor92}). For these particular functions, we give an effective symbolic algorithm solving the zero-counting problem with no calls to oracles. To this end, we construct a subroutine to determine the sign of univariate $E$-polynomials at real algebraic numbers. Our algorithms only perform arithmetic operations and comparisons between rational numbers. In order to deal with real algebraic numbers, we represent them by means of their Thom encodings (see Section \ref{sec:algorithms}). The main result of the paper is the following:

\begin{theorem}\label{mainThm}
Let $f(x) = F(x, e^{h(x)})$ be an $E$-polynomial defined by polynomials $F\in \Z[X,Y]$ and $h\in \Z[X]$ with degrees bounded by $d$ and coefficients of absolute value at most $H$, and let $I=[a, b]$ be a closed interval or $I= \mathbb{R}$. There is an algorithm that computes the number of zeros of $f$ in $I$ within complexity $(2dH)^{d^{O(1)}}$.
\end{theorem}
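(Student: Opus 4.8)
The plan is to combine three ingredients developed earlier in the paper: the general Sturm-sequence procedure that counts zeros of a Pfaffian function $f(x)=F(x,\varphi(x))$ of order $1$ modulo an oracle, the oracle-free sign-determination subroutine for $E$-polynomials at real algebraic numbers, and the explicit upper bound for the absolute value of the real zeros of an $E$-polynomial. The task is then to check that, for inputs of size $d$ and $H$, every step runs within $(2dH)^{d^{O(1)}}$.

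First I would note that $\varphi(x)=e^{h(x)}$ is a Pfaffian function of order $1$ since $\varphi'=h'(x)\varphi$, so $f=F(x,\varphi(x))$ lies in the class treated by the general procedure, and this class is closed under differentiation: $f'(x)=G(x,\varphi(x))$ with $G=\partial_X F+h'(X)\,Y\,\partial_Y F\in\Z[X,Y]$ of degree $d^{O(1)}$. Feeding $F$ and $G$ to the subresultant-based construction yields the required family of Sturm sequences; since each polynomial in it is a polynomial combination of $F$ and $G$, it has degree $d^{O(1)}$ in $X$ and integer coefficients of bit size $(d\log H)^{O(1)}$, so the whole family is computed within $(2dH)^{d^{O(1)}}$ and involves only $d^{O(1)}$ auxiliary $E$-polynomials, each defined by polynomials of degree $d^{O(1)}$ and height $(2dH)^{d^{O(1)}}$. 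The case $I=\R$ (or a half-line) is reduced to a compact one: by the explicit zero bound, all real zeros of $f$ lie in $[-R,R]$ for a rational $R$ of bit size $(d\log H)^{O(1)}$, and replacing $I$ by $[-R,R]$ (intersected with the given half-line, if any) leaves the count unchanged while keeping the endpoints of controlled size.

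Next I would implement the oracle. Each oracle call asks for the sign of one of the auxiliary $E$-polynomials at a real algebraic number $\alpha$ that is either an endpoint of $I$ or a root of one of the subresultant polynomials; in both cases $\alpha$ is a root of a univariate integer polynomial of degree $d^{O(1)}$ and height $(2dH)^{d^{O(1)}}$, which is presented by its Thom encoding. Invoking the sign-determination subroutine for $E$-polynomials at such algebraic numbers — whose correctness and $(2dH)^{d^{O(1)}}$ complexity for inputs of these sizes is established earlier — answers each query, and there are only $(2dH)^{d^{O(1)}}$ queries, so the total cost stays $(2dH)^{d^{O(1)}}$. Correctness of the final count then follows from the correctness of the general Sturm procedure with the oracle replaced by this subroutine.

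The main obstacle is the sign subroutine itself, which is the heart of the matter: one cannot merely approximate $F(\alpha,e^{h(\alpha)})$ to a fixed precision, because $e^{h(\alpha)}$ is transcendental whenever $h(\alpha)\neq 0$ (Lindemann--Weierstrass), so a priori the size of $|f(\alpha)|$, when nonzero, is unknown. The resolution has two parts. First, the degenerate cases $h(\alpha)=0$ and $F(\alpha,Y)\equiv 0$ are decided purely algebraically from the minimal polynomial of $\alpha$ via gcd and resultant computations. Second, when neither occurs, $f(\alpha)\neq 0$ (again by Lindemann--Weierstrass, since $f(\alpha)=0$ would force $e^{h(\alpha)}$ to be algebraic), and one needs an effective lower bound for $|f(\alpha)|$ — a transcendence measure for $e^{h(\alpha)}$ — so that a rational approximation to an explicitly bounded precision certifies the sign. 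Producing such a bound and verifying that the required precision is only $(2dH)^{d^{O(1)}}$ bits is the delicate point on which the complexity estimate ultimately rests.
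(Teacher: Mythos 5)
Your overall architecture matches the paper's: run the Sturm-sequence zero counter with the oracle replaced by the algebraic sign-determination subroutine for $E$-polynomials, then bound everything by $(2dH)^{d^{O(1)}}$. Two places deserve comment.

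For the unbounded case $I=\mathbb{R}$, you reduce to a compact interval $[-R,R]$ using the explicit bound on the size of the real zeros (the last proposition of the paper). That is legitimate. The paper instead defines signs $\sg(g,\pm\infty)$ of an $E$-polynomial directly from the leading coefficients $a_{j_0}$, $a_{d_Y}$ of $G(X,Y)=\sum_j a_j(X)Y^j$ and the sign of ${\rm lc}(h)$, and then writes the count on $(M,+\infty)$ as $v(\mathbf{f},M^{+})-v(\mathbf{f},+\infty)$ (and symmetrically at $-\infty$) where $M$ just needs to clear the roots of the $\tau_i,\rho_i$. This avoids computing or propagating the root-size constant $R$ through the complexity analysis, but both routes land in the same complexity class.

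The place where your description drifts from what the paper actually proves is the internal strategy of the sign subroutine. You frame the needed ingredient as ``an effective lower bound for $|f(\alpha)| = |F(\alpha,e^{h(\alpha)})|$,'' i.e.\ a transcendence measure for a linear combination $\sum_j a_j(\alpha)\,e^{j\,h(\alpha)}$. That is a substantially stronger kind of statement than what the paper uses. Algorithm \texttt{E-SignDetermination} never lower-bounds $|f(\alpha)|$ directly: after excluding the degenerate cases $G(\alpha,Y)\equiv 0$ and $h(\alpha)=0$, it forms the resultant $M(Y)=\mathrm{Res}_X(L(X),G(X,Y))$, whose real roots include all roots of $G(\alpha,Y)$; subroutine \texttt{RootBox} then locates $e^{h(\alpha)}$ between two consecutive real roots $\lambda_{i_0}<\lambda_{i_0+1}$ of $M$, and the sought sign is that of $G(\alpha,w)$ for any rational $w$ in the gap. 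The only transcendence input is Waldschmidt's explicit lower bound for $|e^{\beta}-\alpha|$ with $\alpha,\beta$ algebraic (used inside \texttt{SignExpAlg} to decide each comparison $e^{h(\alpha)}\ \text{vs.}\ \lambda_i$), combined with classical root-separation bounds for $M$. If you intended to prove the direct lower bound on $|f(\alpha)|$, that would be a gap: you would need a quantitative Lindemann--Weierstrass-type estimate for $\sum a_j e^{j\beta}$, which is not among the paper's tools and is not obviously within the stated complexity. If instead you intended to invoke the paper's subroutine as a black box (as your second paragraph suggests), then the argument is fine; just replace the last paragraph's framing with the cell-localization mechanism, since that is what actually makes the required precision $(2dH)^{d^{O(1)}}$ rather than something worse.

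One small further point: the statement of Theorem \ref{mainThm} does not assume $\mathrm{Res}_Y(F,\widetilde F)\neq 0$, so a complete proof must also invoke Lemma \ref{lem:resP} to replace $F$ by a polynomial $P$ for which the resultant is nonzero, and track the (polynomial) blow-up in degree and height — the paper does this in its final remark, and you should make the reduction explicit rather than assume it implicitly.
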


Finally, we prove an explicit upper bound for the absolute value of the real zeros of an $E$-polynomial
in terms of the degrees and absolute values of the coefficients of the polynomials involved.
This bound could be used to separate and approximate the real zeros of an $E$-polynomial. It provides an answer to the `problem of the last root' for this type of functions. Previously, in \cite{Wolter85}, the existence of such a bound was established for general exponential terms, but even though it is given by an inductive argument with a computable number of iterations, the bound is not explicit. Algorithms for the computation of upper bounds for the real roots of functions of the type $P(x, e^x)$ or, more generally, $P(x, \mbox{trans}(x))$, with $P$ an integer polynomial and $\mbox{trans}(x)= e^x, \ \ln(x)$ or $\arctan(x)$ are given in \cite{Maignan98} and \cite{McCallumWeisp2012} respectively.

The paper is organized as follows: in Section \ref{sec:preliminaries}, we fix the notation and recall some basic theoretical and algorithmic results on univariate polynomials. Section
\ref{sec:Sturm} is devoted to the construction of Sturm sequences for the Pfaffian functions we deal with. In Section \ref{sec:generalalgorithm}, we present our general procedure for zero counting. Finally, in Section  \ref{sec:Epolynomials}, we describe the algorithms and prove our main results on $E$-polynomials.

\section{Preliminaries}\label{sec:preliminaries}

\subsection{Basic notation and results}\label{subsec:not}

Throughout the paper, we will deal with univariate and bivariate polynomials. For a polynomial
$F\in \Z[X,Y]$, we write $\deg_X(F)$ and $\deg_Y(F)$ for the degrees of $F$ in the variables $X$ and $Y$ respectively, $H(F)$ for its height, that is, the maximum of the absolute values of its coefficients in $\mathbb{Z}$, and $\textrm{cont}(F)\in \Z[X]$ for the gcd of the coefficients of $F$ as a polynomial in $\Z[X][Y]$.

Note that, if $p_1, p_2\in \Z[X]$ are polynomials with degrees bounded by $d_1$ and $d_2$, and heights bounded
by $H_1$ and $H_2$, then $H(p_1p_2) \le (\min\{d_1, d_2\} +1) H_1 H_2$.

If $f$ is a real univariate analytic function, we denote its derivative by $f'$ and, for $k>1$, its $k$th successive derivative by $f^{(k)}$.

For $\gamma= (\gamma_0,\dots, \gamma_N)\in \R^{N+1}$ with $\gamma_i \ne 0$ for every $0\le i \le N$, the \emph{number of variations in sign} of $\gamma$ is the cardinality of the set $\{1\le i\le N : \gamma_{i-1} \gamma_i <0\}$. For a tuple $\gamma$ of arbitrary real numbers, the number of variations in sign of $\gamma$ is defined as the number of variations in sign of the tuple which is obtained from $\gamma$ by removing its zero coordinates. Given $x\in \mathbb{R}$ and a sequence of univariate real functions $\mathbf{f} =(f_0,\dots, f_N)$ defined at $x$,  we write $v(\mathbf{f},x)$ for the number of variations in sign of the $(N+1)-$tuple $(f_0(x),\ldots,f_N(x))$.

\bigskip

We recall some well-known bounds on the size of roots of univariate polynomials (see \cite[Proposition 2.5.9 and Theorem 2.5.11]{MS}).

\begin{lemma} \label{sizeofroots}
  Let $p= \sum_{j=0}^d a_j X^j\in \C[X]$, $a_d \ne 0$.
Let $r(p) := \max\{ |z|: z\in \C, \ p(z) =0\}$. Then:
\begin{enumerate}
\item[i)] $r(p) < 1 + \max_{} \left\{\left| \dfrac{a_j}{a_d}\right|: 0\le j \le d-1 \right\}$
\item[ii)] $r(p) <  \left( 1+\displaystyle \sum_{0\le j\le d-1} \left| \dfrac{a_j}{a_d}\right|^2\right)^{1/2}$
\end{enumerate}
\end{lemma}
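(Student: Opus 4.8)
The plan is to establish both bounds by the classical argument that reduces to a monic polynomial and then treats roots inside and outside the closed unit disk separately. First I would note that there is no loss of generality in assuming $a_d = 1$: replacing $p$ by $p/a_d$ changes neither the set of roots nor the quantities $|a_j/a_d|$ that appear on the right-hand sides. So assume $a_d=1$ and set $M := \max\{|a_j| : 0 \le j \le d-1\}$ for part i) and $S := \sum_{0 \le j \le d-1} |a_j|^2$ for part ii). If every root $z$ of $p$ satisfies $|z| \le 1$, both inequalities are immediate, since $|z|\le 1\le 1+M$ and $|z|^2\le 1\le 1+S$ (strictly, unless $|z|=1$, which for $M=0$ or $S=0$ cannot happen because then $p=X^d$ and $0$ is the only root). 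It therefore remains to bound an arbitrary root $z$ with $|z|>1$.

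For part i), from $p(z)=0$ and $a_d=1$ we get $z^d = -\sum_{j=0}^{d-1} a_j z^j$, whence, using $|z|>1$ and the geometric series identity $\sum_{j=0}^{d-1} t^j = (t^d-1)/(t-1)$ with $t=|z|\neq 1$,
\[
|z|^d \le \sum_{j=0}^{d-1} |a_j|\,|z|^j \le M \sum_{j=0}^{d-1} |z|^j = M\,\frac{|z|^d - 1}{|z| - 1} < M\,\frac{|z|^d}{|z| - 1}.
\]
Dividing by $|z|^d>0$ gives $|z| - 1 < M$, i.e.\ $|z| < 1 + M$. Together with the case $|z|\le 1$ this yields $r(p) < 1 + M$, which is assertion i).

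For part ii) I would run the same computation but estimate $\sum_{j=0}^{d-1} |a_j|\,|z|^j$ using the Cauchy--Schwarz inequality instead of pulling out the maximum:
\[
|z|^d \le \sum_{j=0}^{d-1} |a_j|\,|z|^j \le \Big(\sum_{j=0}^{d-1} |a_j|^2\Big)^{1/2}\Big(\sum_{j=0}^{d-1} |z|^{2j}\Big)^{1/2} = S^{1/2}\Big(\frac{|z|^{2d}-1}{|z|^2-1}\Big)^{1/2} < S^{1/2}\,\frac{|z|^d}{(|z|^2-1)^{1/2}}.
\]
Squaring and dividing by $|z|^{2d}$ gives $|z|^2 - 1 < S$, i.e.\ $|z|^2 < 1 + S$; combined with the case $|z|\le 1$ this yields $r(p) < (1 + S)^{1/2}$, which is assertion ii).

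I do not expect a genuine obstacle: the argument is elementary, and the only points needing a little care are the use of $t=|z|\neq 1$ in the geometric series and the trivial edge cases ($M=0$ or $S=0$, where $p=a_dX^d$ has $0$ as its only root). Alternatively, one may simply invoke \cite[Proposition 2.5.9 and Theorem 2.5.11]{MS}, of which these are the standard Cauchy-type bounds; the self-contained proof above is included only for completeness.
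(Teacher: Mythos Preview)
Your proof is correct. The paper itself does not prove this lemma at all: it simply records the two bounds and cites \cite[Proposition 2.5.9 and Theorem 2.5.11]{MS}, exactly the reference you mention at the end. So there is no ``paper's approach'' to compare against beyond the citation; your self-contained argument (normalize to monic, split into $|z|\le 1$ and $|z|>1$, and in the latter case use the geometric-series estimate for i) and Cauchy--Schwarz for ii)) is the standard derivation of these Cauchy-type bounds and handles the edge cases $M=0$, $S=0$ cleanly.
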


We will also use the following lower bound for the separation of the roots of a univariate polynomial with integer coefficients (see \cite[Theorem 2.7.2]{MS}):

 \begin{lemma}\label{separation}
 Let $p\in \Z[X]$ be a polynomial of degree $d\ge 2$, and $\alpha_1,\dots, \alpha_d$ be all the roots of $p$. Then
\[ \min \{| \alpha_i-\alpha_j | : \alpha_i\ne \alpha_j \} > d^{-\frac{d+2}{2}} (d+1)^{\frac{1-d}{2}} H(p)^{1-d}.\]
\end{lemma}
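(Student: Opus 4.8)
The plan is to derive the bound from the classical fact that the discriminant of a squarefree integer polynomial is a nonzero integer, after two preliminary reductions.

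\emph{Reduction to a squarefree polynomial.} Since only the distinct roots of $p$ enter the statement, and these coincide with the roots of the primitive squarefree part $q\in\Z[X]$ of $p$, I would first replace $p$ by $q$; put $m=\deg q\le d$, and let $M(\cdot)$ denote the Mahler measure. One has $M(q)\le M(p)$ — indeed $|\mathrm{lc}(q)|\le|\mathrm{lc}(p)|$ (the leading coefficient of $q$ divides that of $p$), and every factor $\max(1,|\beta|)$ over a root $\beta$ of $q$ occurs, to a power at least $1$, among the corresponding factors of $M(p)$ — while Landau's inequality gives $M(p)\le\|p\|_2\le\sqrt{d+1}\,H(p)$. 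Hence it is enough to prove, for every squarefree $r\in\Z[X]$ of degree $n\ge2$ with roots $\gamma_1,\dots,\gamma_n$,
\[
\min\{\,|\gamma_i-\gamma_j|:\gamma_i\ne\gamma_j\,\}\ \ge\ \sqrt3\,n^{-(n+2)/2}\,M(r)^{1-n};
\]
applying this to $r=q$, $n=m$, and then using $M(q)\le M(p)\le\sqrt{d+1}H(p)$ together with the monotonicity of $n^{-(n+2)/2}$, of $(d+1)^{(1-n)/2}$ and of $H(p)^{1-n}$ for $2\le n\le d$ (recall $H(p)\ge1$) gives the claimed inequality. (If $m\le1$ there are no two distinct roots and the statement is vacuous, so I may assume $m\ge2$.)

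\emph{Reduction to a close pair meeting the unit disk.} Fix any two distinct roots $\gamma_1,\gamma_2$ of $r$ and set $s=|\gamma_1-\gamma_2|$; I would prove the displayed inequality for this $s$. If $|\gamma_1|>1$ and $|\gamma_2|>1$ and $r(0)\ne0$, I would pass to the reciprocal polynomial $\widehat r(X)=X^n r(1/X)\in\Z[X]$, which has the same degree and the same Mahler measure, roots $1/\gamma_k$, and $|1/\gamma_1-1/\gamma_2|=s/(|\gamma_1||\gamma_2|)\le s$; since $|\gamma_1||\gamma_2|>1$, the inequality for $\widehat r$ and the pair $1/\gamma_1,1/\gamma_2$ (now inside the unit disk) implies it for $r$. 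If $r(0)=0$, the root $0$ is simple and distinct from $\gamma_1,\gamma_2$, so I would pass to $r/X$ (same Mahler measure, degree $n-1$) and proceed by induction on $n$. Thus, after relabelling, I may assume $|\gamma_2|\le1$.

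\emph{Main step.} As $r$ is squarefree, $\mathrm{disc}(r)=a^{2n-2}\prod_{i<j}(\gamma_i-\gamma_j)^2$ (with $a=\mathrm{lc}(r)$) is a nonzero integer, so $\prod_{i<j}|\gamma_i-\gamma_j|^2\ge|a|^{2-2n}$, and therefore
\[
s^2\ \ge\ \dfrac{|a|^{2-2n}}{\prod_{\substack{i<j\\(i,j)\ne(1,2)}}|\gamma_i-\gamma_j|^2}.
\]
To bound the denominator, I would subtract the second row from the first in the Vandermonde matrix $(\gamma_i^{\,k-1})_{1\le i,k\le n}$ and factor out $\gamma_1-\gamma_2$; this presents $\prod_{i<j,(i,j)\ne(1,2)}(\gamma_i-\gamma_j)$, up to sign, as the determinant of the $n\times n$ matrix whose first row is $\bigl(h_{k-2}(\gamma_1,\gamma_2)\bigr)_{k=1}^n$, where $h_\ell(\gamma_1,\gamma_2)=\sum_{t=0}^{\ell}\gamma_1^{\,t}\gamma_2^{\,\ell-t}$ (and $h_{-1}:=0$), and whose rows $2,\dots,n$ are the Vandermonde rows of $\gamma_2,\dots,\gamma_n$. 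Hadamard's inequality applied to the rows, together with $\sum_{k=0}^{n-1}|\gamma_i|^{2k}\le n\max(1,|\gamma_i|)^{2(n-1)}$, the estimate $|h_\ell(\gamma_1,\gamma_2)|\le(\ell+1)\max(|\gamma_1|,|\gamma_2|)^{\ell}\le(\ell+1)\max(1,|\gamma_1|)^{\ell}$ (this is where $|\gamma_2|\le1$ is used), and $\sum_{\ell=1}^{n-1}\ell^2\le n^3/3$, yields
\[
\prod_{\substack{i<j\\(i,j)\ne(1,2)}}|\gamma_i-\gamma_j|\ \le\ \frac{n^{3/2}}{\sqrt3}\,\max(1,|\gamma_1|)^{n-2}\prod_{i=2}^n\sqrt n\,\max(1,|\gamma_i|)^{n-1}\ =\ \frac{n^{(n+2)/2}}{\sqrt3}\,\max(1,|\gamma_1|)^{n-2}\prod_{i=2}^n\max(1,|\gamma_i|)^{n-1}.
\]
Since $|\gamma_2|\le1$, the factor for $i=2$ equals $1$, and raising the exponent of $\max(1,|\gamma_1|)$ from $n-2$ to $n-1$ only increases the right-hand side; hence it is at most $\tfrac{n^{(n+2)/2}}{\sqrt3}\bigl(M(r)/|a|\bigr)^{n-1}$. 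Substituting this into the bound for $s^2$ cancels every power of $|a|$ and leaves $s^2\ge 3\,n^{-(n+2)}M(r)^{-(2n-2)}$, which is the required inequality.

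\emph{The main obstacle.} The crux is the estimate for $\prod_{i<j,(i,j)\ne(1,2)}|\gamma_i-\gamma_j|$. Bounding each factor crudely by $2\max(1,|\gamma_i|)\max(1,|\gamma_j|)$ yields the correct exponent $n-1$ on $M(r)$ (hence on $H(p)$) but a constant of size $2^{\Theta(n^2)}$, far larger than the required $n^{\Theta(n)}$; conversely, a plain Hadamard estimate on a Vandermonde minor produces the sharp constant but — because the row encoding the omitted factor $\gamma_1-\gamma_2$ inevitably reuses one of $\gamma_1,\gamma_2$ — an exponent close to $2n-3$ on $M(r)$, which is far too weak for the statement. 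Getting both quantities right at once is precisely what the reciprocal normalization $|\gamma_2|\le1$ is for: it forces the reused root to contribute a trivial factor. Once that is arranged, pinning down the exact constants $\sqrt3$ and $n^{(n+2)/2}$ is only careful bookkeeping of $\sum\ell^2$ and of the Hadamard row norms, and checking the two reductions (the inequality $M(q)\le M(p)$, and the invariance of $M$ and the scaling of the pair under reciprocation) is routine.
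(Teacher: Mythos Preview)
The paper does not prove this lemma; it is quoted verbatim from \cite[Theorem 2.7.2]{MS} (Mignotte--\c{S}tef\v{a}nescu) without argument, so there is no ``paper's own proof'' to compare against. Your proposal is a correct, self-contained proof and is in fact the classical Mahler argument that underlies the cited reference: reduce to the squarefree part (using $M(q)\le M(p)$ and Landau's inequality $M(p)\le\sqrt{d+1}\,H(p)$), normalize via reciprocation so that one member of the close pair lies in the closed unit disk, then combine $|\mathrm{disc}(r)|\ge1$ with Hadamard's inequality applied to the Vandermonde matrix after the row operation that isolates the factor $\gamma_1-\gamma_2$. All the reductions check out (in particular $M(\widehat r)=M(r)$, the induction step through $r/X$ when $r(0)=0$ yields a stronger bound since $M(r/X)=M(r)$ and both $n\mapsto n^{-(n+2)/2}$ and $M(r)^{1-n}$ move in the right direction, and the case $n=2$ with $r(0)=0$ cannot occur under $|\gamma_1|,|\gamma_2|>1$), and the bookkeeping $\sum_{\ell=1}^{n-1}\ell^2\le n^3/3$ together with the row-norm estimate $\sum_{k=0}^{n-1}|\gamma_i|^{2k}\le n\max(1,|\gamma_i|)^{2(n-1)}$ gives exactly the constants $\sqrt3$ and $n^{(n+2)/2}$, hence the stated bound after passing back from $M(p)$ to $H(p)$.
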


\bigskip

A basic tool for our results is the well-known theory of subresultants for univariate
polynomials with coefficients in a ring and its relation with polynomial remainder sequences (see \cite[Chapter 8]{BPR}).

Let $F(X,Y)$ and $G(X,Y)$ be polynomials in $\Z[X,Y]$ of degrees $d$ and $e$ in the variable $Y$ respectively. Assume $e<d$. Following \cite[Notation 8.33]{BPR}, for every  $-1\le j \le d$, let ${\rm SRes}_j$ be the $j$th signed subresultant of $F$ and $G$ considered as polynomials in $\Z[X][Y]$.
By the structure theorem for subresultants (see \cite[Theorem 8.34 and Proposition 8.40]{BPR}),
we have that $${\rm SRes}_{e-1} =  - \textrm{Remainder}( (-1)^{(d-e-1)(d-e)/2} {\rm lc}(G)^{d-e+1} F, G),$$
where ${\rm lc}(G)$ is the leading coefficient of $G$ and, for an index $i$ with $1\le i \le d$ such that ${\rm SRes}_{i-1}$ is non-zero of degree $j$:
\begin{itemize}
\item If ${\rm SRes}_{j-1} =0$, then ${\rm SRes}_{i-1} = \textrm{gcd}(F,G)$ up to a factor in $\Z[X]$.
\item If ${\rm SRes}_{j-1} \ne0$ has degree $k$,
$$s_j t_{i-1} {\rm SRes}_{k-1} = - \textrm{Remainder}(s_k t_{j-1} {\rm SRes}_{i-1} , {\rm SRes}_{j-1})$$
and the quotient lies in $\Z[X][Y]$. Here, $s_l$ denotes the $l$th subresultant coefficient of $F$ and $G$ as defined in \cite[Notation 4.22]{BPR} and $t_l$ is the leading coefficient of ${\rm SRes}_l$.
\end{itemize}

We define a sequence of integers as follows:
\begin{itemize}
\item $n_0= d+1$, $n_1 =d$.
\item For $i\ge 1$, if ${\rm SRes}_{n_i-1} \ne 0$, then $n_{i+1}= \deg({\rm SRes}_{n_{i}-1})$.
\end{itemize}

The polynomials
$$
R_i:= {\rm SRes}_{n_i-1}
$$
are proportional to the polynomials in the Euclidean remainder sequence associated to $F$ and $G$. Moreover, the following relations hold:
\begin{equation}\label{euclid1}
(-1)^{(d-e)(d-e+1)/2} {\rm lc}(G)^{d-e+1} R_0 =  R_1 C_1 -R_2
\end{equation}
\begin{equation}\label{euclid2}
 s_{n_{i+2}} t_{n_{i+1}-1} R_i = R_{i+1} C_{i+1} - s_{n_{i+1}} t_{n_i-1} R_{i+2}\qquad \hbox{for } i\ge 1
 \end{equation}
where $C_i\in \Z[X][Y]$ for every $i$.

\subsection{Algorithms and complexity}\label{sec:algorithms}

The algorithms we consider in this paper are described by arithmetic networks over
$\Q$ (see \cite{vzG86}). The notion of complexity of an algorithm we
consider is the number of operations and comparisons in $\Q$.
The objects we deal with are polynomials with coefficients in $\Q$, which are represented by the array of all their coefficients in a pre-fixed order of their monomials.

To estimate complexities we will use the following results (see \cite{vzGG}). The product of two polynomials in $\Q[X]$ of degrees bounded by $d$ can be done within complexity $O(M(d))$, where $M(d) = d \log(d) \log\log(d)$. Interpolation of a degree $d$ polynomial in $\Q[X]$ requires $O(M(d)\log(d))$ arithmetic operations.
We will use the Extended Euclidean Algorithm to compute the gcd of two polynomials in $\Q[X]$ of degrees
bounded by $d$ within  complexity $O(M(d) \log(d))$.
We will compute subresultants by means of matrix determinants, which enables us to control both the complexity and output size (an alternative method for the computation of subresultants, based on the Euclidean algorithm, can be found in \cite[Algorithm 8.21]{BPR}). For a matrix in $\Q^{n\times n}$, its determinant can be obtained within complexity $O(n^\omega)$, where $\omega<2.376$ (see \cite[Chapter 12]{vzGG}).

For a polynomial in $\Z[X]$, we will need to approximate its real roots by rational numbers and to isolate them in disjoint intervals of pre-fixed length with rational endpoints. There are several known algorithms achieving these tasks (see, for instance, \cite{SagraloffMelhorn2016} and the references therein). Here we use a classical approach via Sturm sequences. The complexity of the algorithm based on this approach is suboptimal. However, the complexity order of the procedures in which we use it as a subroutine would not change even if we replaced it with the one with the best known complexity bound.

\begin{lemma}\label{intervalsforroots}
Let $p\in \Z[X]$ be a polynomial of degree bounded by $d$ and $\epsilon \in \Q$,
$\epsilon >0$. There is an algorithm which computes finitely many pairwise disjoint intervals $I_j= (a_j, b_j]$ with $a_j, b_j \in \Q$ and $b_j-a_j\le \epsilon$ such that each $I_j$ contains at least one real root of $p$ and every real root of $p$ lies in some $I_j$. The complexity of the algorithm is of order $O(d^3 \log(H(p)/\epsilon))$.
\end{lemma}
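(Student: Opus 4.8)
{Proof sketch.}
The plan is to use the classical bisection method based on Sturm's theorem (see, e.g., \cite{BPR}), combined with the root bound of Lemma~\ref{sizeofroots}. We may assume $p\ne 0$. First I would compute the square-free part $\bar p := p/\gcd(p,p')$ of $p$ by the Extended Euclidean Algorithm (cost $O(M(d)\log d)$); it lies in $\Q[X]$, has degree $\le d$, and has exactly the real roots of $p$, all now simple. If $\bar p$ is a constant, $p$ has no real roots and the algorithm outputs the empty list; so assume $\deg\bar p\ge 1$. Next I would form the Sturm sequence $\mathbf{s}=(s_0,\dots,s_N)$ of $\bar p$, namely $s_0=\bar p$, $s_1=\bar p'$ and $s_{i+1}=-\textrm{Remainder}(s_{i-1},s_i)$ for $i\ge 1$ (stopping at a zero remainder); since $\bar p$ is square-free, $s_N$ is a nonzero constant. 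This is a polynomial remainder sequence of at most $d+1$ polynomials of degree $\le d$, computable within $O(d^3)$ operations. By Sturm's theorem, for rationals $a<b$ with $\bar p(a)\ne 0$ the number of real roots of $\bar p$ in $(a,b]$ equals $v(\mathbf{s},a)-v(\mathbf{s},b)$, and this difference is evaluated by computing the $N+1\le d+1$ rationals $s_i(a)$, $s_i(b)$ by Horner's rule ($O(d^2)$ operations) and counting sign variations.

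To bound all the real roots, I would apply Lemma~\ref{sizeofroots}(i) to $p=\sum_{j=0}^{d}a_jX^j$ ($a_d\ne 0$): since the $a_j$ are integers, every complex root of $p$ has modulus $<1+\max_j|a_j/a_d|\le 1+H(p)$. Setting $M:=1+H(p)\in\Q$, all real roots of $\bar p$ lie in $J_0:=(-M,M]$, and in particular $\bar p(-M)\ne 0$. The bisection then proceeds as follows: keep a working list of half-open rational intervals, initialized to $\{J_0\}$; for each $(a,b]$ in the list compute $c:=v(\mathbf{s},a)-v(\mathbf{s},b)$; if $c=0$ discard the interval; if $c\ge 1$ and $b-a\le\epsilon$ move it to the output list; otherwise replace it by its two halves $(a,m]$, $(m,b]$ with $m=(a+b)/2$. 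If $\bar p(m)=0$ (which can be checked exactly since $\bar p\in\Q[X]$), replace $m$ by a nearby rational with $\bar p\ne 0$ lying strictly between $a$ and $b$; such a rational exists because $\bar p$ has finitely many roots, and this keeps all left endpoints root-free, which is all that the sign-variation count above requires. The output is a finite set of pairwise disjoint half-open intervals of length $\le\epsilon$, each containing at least one real root of $p$; and since no interval carrying a root is ever discarded, they cover all real roots of $p$. This is exactly what the lemma asks for.

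For the complexity: an interval of initial width $2M=2(1+H(p))$ reaches width $\le\epsilon$ after $k:=\lceil\log_2(2(1+H(p))/\epsilon)\rceil=O(\log(H(p)/\epsilon))$ bisections, so the binary tree of examined intervals has depth $O(\log(H(p)/\epsilon))$. The key observation is that the intervals examined at any fixed depth are pairwise disjoint and only those containing a root of $\bar p$ are further subdivided, so at most $\deg\bar p\le d$ of them are subdivided at each depth; counting their (at most two) children as well, the whole tree has $O(d\log(H(p)/\epsilon))$ nodes. At each node we perform one evaluation of $\mathbf{s}$ at a rational, costing $O(d^2)$ operations. Hence the bisection costs $O(d^3\log(H(p)/\epsilon))$, which dominates the $O(M(d)\log d)+O(d^3)$ cost of the preprocessing and yields the claimed bound.

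The algorithm is classical, so the only real work in the proof is the bookkeeping of the last paragraph — the bound on the number of examined intervals — together with the (minor) care needed when a subdivision point happens to be a root of $\bar p$; everything else is routine.
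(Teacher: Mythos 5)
Your proposal takes essentially the same route as the paper: bound the real roots by $1+H(p)$ via Lemma~\ref{sizeofroots}, bisect from $(-(1+H(p)),1+H(p)]$, use a Sturm sequence to test each subinterval for roots, and charge $O(d^2)$ per evaluation over a tree of $O(d\log(H(p)/\epsilon))$ relevant nodes. The only real differences are cosmetic. You first pass to the square-free part $\bar p$ and use the classical Sturm sequence of $\bar p,\bar p'$; the paper works directly with $p$ and the Sturm sequence of $p,p'$ as in \cite[Theorem~2.50]{BPR}, which handles multiple roots without the extra gcd. And when the midpoint $m$ happens to be a root, the paper immediately emits an output interval $(c-\epsilon,c]$ around it and continues on $(a,c-\epsilon]$ and $(c,b]$, whereas you perturb $m$ to a nearby non-root; that works, but you should say explicitly that the perturbation is kept below, say, $(b-a)/4$ so that both halves still shrink by a constant factor and the depth bound $O(\log(H(p)/\epsilon))$ survives — as written, ``nearby rational'' is a little loose for a complexity claim, though easy to repair. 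Neither difference changes the asymptotic bound.
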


\begin{proof}{Proof.} The algorithm works recursively.
Starting with the interval $J= (-(1+H(p)), 1+H(p)]$, which contains all the real roots of $p$ (see Lemma \ref{sizeofroots}), at each intermediate step, finitely many intervals are considered. Given an interval $J=(a, b]$ with $\{ p=0\} \cap J\ne \emptyset$ and $|J|>\epsilon$, the procedure runs as follows:
\begin{itemize}
\item Let $c= \frac{a+b}{2}$ and $J_r = (c, b]$.
\item If $p(c) \ne 0$, let $J_l=(a, c]$.
\item If $p(c) =0$ and $c-\epsilon >a$, let $I= (c-\epsilon, c]$ and $J_l = (a, c-\epsilon]$.
If $p(c)=0$ and $c-\epsilon\le a$, take $I=(a, c]$. (Note that, in any case, $I$ contains a real root of $p$ and has length at most $\epsilon$.)
\item Determine, for each of the intervals $J_r$ and $J_l$, whether $p$ has a real root in that interval or not. Keep the intervals that contain real roots of $p$.
\end{itemize}
The recursion finishes when the length of all the intervals is at most $\epsilon$. The output consists of all the intervals of length at most $\epsilon$ containing roots of $p$, including the intervals $I$ appearing at intermediate steps.

In order to determine whether $p$ has a real root in a given interval, we use the Sturm sequence of $p$ and $p'$ (see \cite[Theorem 2.50]{BPR}), which is computed within complexity $O(M(d) \log(d))$ by means of the Euclidean Algorithm.

At each step of the recursion, we keep at most $d$ intervals together with the number of variations in sign of the Sturm sequence evaluated at each of their endpoints. For each of these intervals, the procedure above requires at most $2d+1$ additional evaluations of polynomials of degrees at most $d$. Then, the complexity of each recursive step is of order $O(d^3)$.

Since the length of the intervals at the $k$th step is at most $\frac{1+H(p)}{2^{k-1}}$, the number of steps is at most $1+\lceil \log( \frac{1+H(p)}{\epsilon}) \rceil$.  Therefore, the overall complexity is $O(d^3 \log(H(p)/\epsilon))$.
\end{proof}

In order to deal with real algebraic numbers in a symbolic way, we will use
\emph{Thom encodings}.
We recall here their definition and main properties (see \cite[Chapter 2]{BPR}).
Given $p \in \R[X]$ and a real root $\alpha$ of $p$, the Thom encoding of $\alpha$ as a root of $p$ is
the sequence $(\text{sign}(p'(\alpha)), \dots, \text{sign}(p^{(\deg p)}(\alpha))),$ where we represent the sign with an element of the set $\{  0, 1,-1\}$.
Two different real roots of $p$ have different Thom encodings. In addition, given
the Thom encodings of two different real roots $\alpha_1$ and $\alpha_2$ of $p$, it is possible to decide which is the smallest between $\alpha_1$ and $\alpha_2$ (see \cite[Proposition 2.28]{BPR}).

For a polynomial $p\in \R[X]$, we will denote
$$\text{Der}(p):=(p, p' ,\dots, p^{(\deg p)})$$

A useful tool to compute Thom encodings and manipulate real algebraic numbers is an effective procedure for the determination of feasible sign conditions on real univariate polynomials.
For $p_1,\dots, p_s \in \R[X]$,  a \emph{feasible sign condition for $p_1,\dots, p_s$ on a finite set $Z\subset \R$} is an $s$-tuple $(\sigma_1,\dots, \sigma_s) \in \{=, >, <\}^s$  such that $\{ x\in Z : p_1(x) \sigma_1 0,\dots, p_s(x) \sigma_s 0\} \ne \emptyset$.

\begin{lemma}\label{perrucci11} (see \cite[Corollary 2]{Perrucci11})
Given $p_0, p_1,\dots, p_s \in \R[X]$, $p_0\not\equiv 0$, $\deg p_i \le d$ for $i = 0,\dots, s$, the feasible sign conditions for $p_1,\dots, p_s$ on $\{p_0 = 0\}$ can be computed algorithmically within $O(sd^2 \log^3( d))$
operations. Moreover, if $p_0$ has $m$ roots in $\R$, this can be done within $O(smd \log(m) \log^2(d))$ operations. The output of the algorithm is a list of $s$-tuples in $\{0,1,-1\}^s$, where $0$ stands for $=$, $1$ for $>$ and $-1$ for $<$.
\end{lemma}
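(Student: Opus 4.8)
{Sketch of proof.}
The plan is to reduce the computation of feasible sign conditions to a bounded number of \emph{Tarski queries} together with some linear algebra over $\Q$, following the sign-determination scheme of Ben-Or--Kozen--Reif as refined in \cite[Chapter 10]{BPR} and in \cite{Perrucci11}. For $q\in\R[X]$, recall the Tarski query
\[
\mathrm{TaQ}(q,p_0):=\#\{x\in\R: p_0(x)=0,\ q(x)>0\}-\#\{x\in\R: p_0(x)=0,\ q(x)<0\},
\]
and the classical fact (a direct consequence of the subresultant/signed remainder machinery recalled in Section~\ref{sec:preliminaries}) that $\mathrm{TaQ}(q,p_0)$ equals the difference between the numbers of sign variations at $-\infty$ and at $+\infty$ of the signed remainder sequence of $p_0$ and $p_0'\,(q\bmod p_0)$. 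Each such query therefore costs $O(M(d)\log(d))$ operations (fast Euclidean algorithm), or $O(d^2)$ via determinants, once $q\bmod p_0$ is available, since all polynomials involved have degree $<d$. First I would record the multiplicative identity on which everything rests: writing $Z:=\{p_0=0\}$, $\Sigma\subseteq\{0,1,-1\}^s$ for the (unknown) set of feasible sign conditions of $p_1,\dots,p_s$ on $Z$, and $c(\sigma)$ for the number of points of $Z$ realizing $\sigma$, one has, for every exponent vector $\alpha\in\{0,1,2\}^s$,
\[
\mathrm{TaQ}(p_1^{\alpha_1}\cdots p_s^{\alpha_s},\,p_0)=\sum_{\sigma\in\Sigma}\Big(\textstyle\prod_{j}\sigma_j^{\alpha_j}\Big)\,c(\sigma),
\]
with the usual conventions $0^0=1$, a power of $\pm1$ with even (resp.\ odd) exponent equal to $1$ (resp.\ $\pm1$), and a positive power of $0$ equal to $0$. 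Over all $\alpha\in\{0,1,2\}^s$ this is a linear system whose matrix is the $s$-fold Kronecker power of the invertible matrix $\left(\begin{smallmatrix}1&1&1\\0&1&-1\\0&1&1\end{smallmatrix}\right)$, so $\Sigma$ and the multiplicities $c(\sigma)$ are determined by the Tarski queries; the whole difficulty is to avoid the $3^s$ queries that a naive inversion would require.

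To that end I would run the standard incremental procedure: process $p_1,\dots,p_s$ one index at a time, keeping after step $i$ the exact list $\Sigma_i$ of feasible sign conditions of $p_1,\dots,p_i$ on $Z$ with their multiplicities, and an \emph{adapted} family $A_i\subseteq\{0,1,2\}^i$ with $|A_i|=|\Sigma_i|$ for which the square matrix $\big(\prod_{j}\sigma_j^{\alpha_j}\big)_{\alpha\in A_i,\,\sigma\in\Sigma_i}$ is invertible (such an $A_i$ exists because the $|\Sigma_i|$ columns of a Kronecker power of the $3\times3$ matrix indexed by $\Sigma_i$ are linearly independent, and it can be updated cheaply). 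Since every point of $Z$ realizes exactly one sign condition, $|\Sigma_i|\le m\le d$ at all times. At step $i+1$, each $\sigma\in\Sigma_i$ extends to at most the three conditions $(\sigma,0),(\sigma,1),(\sigma,-1)$; to decide which occur and with what multiplicities it suffices to compute the $3|A_i|\le 3m$ Tarski queries $\mathrm{TaQ}(p^\alpha p_{i+1}^{\,e},p_0)$, $\alpha\in A_i$, $e\in\{0,1,2\}$, and solve the resulting system, whose matrix, once restricted to the at most $m$ realized new conditions, still has full column rank by the Kronecker structure. One then drops the non-realized conditions, selects $A_{i+1}$, and continues; after step $s$ the list $\Sigma_s$ is the required output. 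Correctness is an induction on $i$: at each step the solved linear system has a unique solution, so $\Sigma_i$ and its multiplicities are exact.

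For the complexity, the outer loop has $s$ iterations, each performing $O(m)=O(d)$ Tarski queries. The needed power products $p^\alpha p_{i+1}^{\,e}\bmod p_0$ are obtained incrementally --- reusing the residues $p^\alpha\bmod p_0$ kept with the previous adapted family and multiplying by $p_{i+1}\bmod p_0$ at most twice, always reducing modulo $p_0$ --- at cost $O(M(d)\log(d))$ each, and each query is then evaluated from a signed remainder sequence within $O(M(d)\log(d))$ (or $O(d^2)$) operations; the per-step linear algebra, handled through the recursive $3\times3$ Kronecker structure rather than by inverting an $O(m)\times O(m)$ matrix, costs $O(m\log^{O(1)}m)$. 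Multiplying these contributions gives the bound $O(s\,d^2\log^{O(1)}d)$. The sharper estimate $O(s\,m\,d\log(m)\log^2(d))$ valid when $p_0$ has $m$ real roots comes from replacing the crude cost of each Tarski query by the finer analysis of \cite{Perrucci11}, in which the number of sign changes one tracks along the remainder sequence --- and the number of realized conditions --- is controlled by $m$ instead of $d$. I expect the genuine obstacle to be exactly this bookkeeping: proving that an adapted family of size $|\Sigma_i|\le m$ always exists, can be maintained under the incremental step with only $O(m)$ queries while keeping the relevant sign matrix invertible, and that the overhead stays within the claimed polylogarithmic factors --- the algebraic identity and the correctness induction being routine once the Tarski-query/subresultant dictionary of Section~\ref{sec:preliminaries} is in place.
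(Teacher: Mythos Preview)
The paper gives no proof of this lemma at all: it is stated as a quotation of \cite[Corollary 2]{Perrucci11} and used as a black box throughout. So there is no ``paper's own proof'' to compare your sketch against.

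That said, your outline is the standard one and is essentially what lies behind the cited result: reduce sign determination on $\{p_0=0\}$ to Tarski queries via the signed-remainder/subresultant machinery, run the Ben-Or--Kozen--Reif incremental scheme maintaining an adapted family $A_i$ of size $|\Sigma_i|\le m$, and invert the Kronecker-structured system at each step. Your complexity accounting for the coarse $O(sd^2\log^3 d)$ bound is fine ($s$ steps, $O(d)$ queries per step, each at cost $O(M(d)\log d)$). You are also right that the refined $O(smd\log(m)\log^2 d)$ bound is exactly where Perrucci's contribution sits, and your sketch correctly identifies --- but does not carry out --- the nontrivial part: controlling the linear-algebra overhead and the per-step query cost by $m$ rather than $d$ while keeping the adapted family small. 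If you want a self-contained proof rather than a pointer, that is the piece you would need to fill in; for the purposes of this paper, simply citing \cite{Perrucci11} (as the authors do) suffices.
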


\section{Sturm sequences and zero counting for Pfaffian functions}\label{sec:Sturm}

Following \cite{Heindel71}, we introduce the notion of a Sturm sequence for a continuous function in a real interval:

\begin{definition} \label{defisturm}
Let $f_0:(a,b)\rightarrow\R$ be a continuous function of a single variable. A sequence of continuous functions $\mathbf{f} =(f_0,\ldots,f_N)$ on $(a, b)$ is said to be a \emph{Sturm sequence for $f_0$ in the interval $(a,b)$ } if the following conditions hold:
\begin{enumerate}
	\item If $f_0(y)=0$, there exists $\epsilon>0$ such that $f_1(x)\neq0$ for every $x\in (y-\epsilon,y+\epsilon)\subseteq(a,b)$, $x\neq y$, $f_0(x)f_1(x)<0$ for $y-\epsilon<x<y$ and  $f_0(x)f_1(x)>0$ if $y<x<y+\epsilon$.
	\item For every $i=1,\ldots,N-1$, if $f_i(x)=0$ for $x\in(a,b)$, then $f_{i-1}(x)f_{i+1}(x)<0$.
	\item $f_N(x)\neq 0$ for every $x\in(a,b)$.
\end{enumerate}	
\end{definition}

Recalling that, for a given $x\in \R$, $v(\mathbf{f}, x)$ denotes the number of variations in sign of the $(N+1)$-tuple $(f_0(x), \dots, f_N(x))$, we have the following analog of the classical Sturm theorem:

\begin{theorem}(\cite[Theorem 2.1]{Heindel71}) \label{resultadosturm}
Let $f_0:(a,b)\rightarrow\R$ be a continuous function of a single variable. Let $\mathbf{f}=(f_0,\ldots,f_N)$ be a Sturm sequence for $f_0$ in the interval $(a,b)$ and let $a<c< d<b$. Then, the number of
distinct real zeros of $f_0$ in the interval $(c,d]$ is $v(\mathbf{f},c)-v(\mathbf{f},d)$.
\end{theorem}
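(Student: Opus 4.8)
The plan is to follow the classical proof of Sturm's theorem, tracking the integer-valued function $x\mapsto v(\mathbf{f},x)$ as $x$ ranges over $[c,d]$ and showing that it is non-increasing, with total decrease equal to the number of zeros of $f_0$ in $(c,d]$. The argument rests on three ingredients: (i) $f_0$ has only finitely many zeros in $[c,d]$; (ii) $v(\mathbf{f},\cdot)$ is locally constant at every point of $[c,d]$ which is not a zero of $f_0$; and (iii) $v(\mathbf{f},\cdot)$ decreases by exactly $1$ as $x$ increases through a zero of $f_0$. Granting these, a telescoping argument over the zeros $y_1<\cdots<y_m$ of $f_0$ in $(c,d]$ yields $v(\mathbf{f},c)-v(\mathbf{f},d)=m$, the half-open convention $(c,d]$ being exactly what makes the two endpoint contributions come out right.

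For (i), the set $f_0^{-1}(0)\cap[c,d]$ is closed by continuity of $f_0$ and, by the first condition in Definition \ref{defisturm}, each of its points is isolated; a closed, bounded, discrete subset of $\R$ is finite. The key preliminary observation for (ii) and (iii), used repeatedly, is that at no point $y\in(a,b)$ can two consecutive functions of the sequence vanish simultaneously: if $f_i(y)=f_{i+1}(y)=0$ with $0\le i\le N-2$, the second condition of Definition \ref{defisturm} applied at index $i+1$ forces $f_i(y)f_{i+2}(y)<0$, a contradiction, while the case $i=N-1$ is ruled out by $f_N(y)\ne0$. Consequently, by continuity, on a small enough punctured neighbourhood of any $y$ each function vanishing at $y$ is flanked by two functions of constant, opposite signs, so the number of sign variations contributed locally by such a block equals $1$ both at $y$ and nearby, regardless of the sign taken by the middle function there; entries not adjacent to a vanishing one simply retain their signs. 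This proves (ii) when $f_0(y)\ne0$.

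For (iii), fix a zero $y$ of $f_0$ and write, for $x$ in a small neighbourhood of $y$, $v(\mathbf{f},x)=\varepsilon(x)+w(x)$, where $w(x)$ is the number of sign variations of $(f_1(x),\dots,f_N(x))$ and $\varepsilon(x)\in\{0,1\}$ records a sign variation between $f_0(x)$ and $f_1(x)$ (this decomposition is legitimate because $f_1(y)\ne0$ by the first condition of Definition \ref{defisturm}, so $f_1$ is nonzero near $y$ and is the second surviving entry whenever $f_0(x)\ne0$). The argument of the previous paragraph, applied to the tuple $(f_1,\dots,f_N)$, shows $w$ is constant near $y$; and the first condition of Definition \ref{defisturm} gives $f_0f_1<0$ on $(y-\epsilon,y)$ and $f_0f_1>0$ on $(y,y+\epsilon)$, i.e. $\varepsilon\equiv1$ just to the left of $y$ and $\varepsilon\equiv0$ at $y$ and just to its right. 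Hence $v(\mathbf{f},\cdot)$ equals $v(\mathbf{f},y)+1$ immediately to the left of $y$ and $v(\mathbf{f},y)$ at $y$ and immediately to its right.

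To conclude, note that by (ii) the function $v(\mathbf{f},\cdot)$ is constant on each of the open intervals between consecutive $y_j$, hence, combining with (iii), $v(\mathbf{f},c)=v(\mathbf{f},y_1)+1$, $v(\mathbf{f},y_j)=v(\mathbf{f},y_{j+1})+1$ for $1\le j\le m-1$, and $v(\mathbf{f},y_m)=v(\mathbf{f},d)$ (the first and last identities using that, whatever the sign of $f_0$ at $c$ and $d$, the relevant one-sided behaviour from (ii)/(iii) applies on the correct side); summing gives $v(\mathbf{f},c)-v(\mathbf{f},d)=m$, which is the number of distinct zeros of $f_0$ in $(c,d]$. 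The main point to be careful about is the sign bookkeeping in (ii)--(iii): one must ensure that the count is completely insensitive to the (possibly infinitely many) zeros of $f_1,\dots,f_N$, which is precisely what the "no two consecutive zeros" observation delivers, and one must handle the two endpoints asymmetrically, as dictated by the half-open interval.
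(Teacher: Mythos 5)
The paper does not supply a proof of this statement: it is quoted verbatim from Heindel's 1971 paper (Theorem 2.1 there) and used as a black box. Your reconstruction is a correct proof along the classical Sturm-theory lines: finiteness of the zeros of $f_0$ in $[c,d]$ from isolation (condition 1), local constancy of $v(\mathbf{f},\cdot)$ away from zeros of $f_0$, and a drop of exactly one across each zero of $f_0$, with the whole bookkeeping resting on the ``no two consecutive functions vanish simultaneously'' observation, which you correctly extract from conditions 2 and 3. The endpoint asymmetry you flag (half-open interval $(c,d]$, $v$ right-continuous at zeros of $f_0$) is handled correctly.

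One small inaccuracy worth fixing: in step (iii) the parenthetical asserts that $f_1(y)\ne 0$ holds ``by the first condition of Definition \ref{defisturm}.'' Condition 1 only guarantees $f_1(x)\ne 0$ for $x\ne y$ in a punctured neighbourhood of $y$; it says nothing at $y$ itself. The fact $f_1(y)\ne 0$ is precisely the $i=0$ instance of the ``no two consecutive zeros'' observation you proved in step (ii), which comes from condition 2 (and from condition 3 in the degenerate case $N=1$). Since you did establish it there, this is only a misattribution and the argument as a whole is sound.
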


The aim of this section is to build Sturm sequences for a particular class of Pfaffian functions we introduce below. For the definition of Pfaffian functions in full generality and the basic properties of these functions see, for instance, \cite{GV04}.

Given a polynomial $\Phi\in \Z[X,Y]$ with $\deg_Y(\Phi)>0$,
let $\varphi$ be a function satisfying the differential equation
\begin{equation}\label{defvarphi}
\varphi'(x) = \Phi(x, \varphi(x)).
\end{equation}
Note that $\varphi$ is analytic on its domain, which may be a proper subset of $\R$.

We are going to work with Pfaffian functions of the type
\[ f(x) = F(x, \varphi(x)),\]
where $F\in \Z[X,Y]$. 

Taking into account that the first derivative of such a function is
\[ \frac{\partial F}{\partial X}(x, \varphi(x)) + \frac{\partial F}{\partial Y}(x, \varphi(x)). \Phi(x, \varphi(x)),\]
we define, for any $F\in \Z[X, Y]$, the polynomial $\widetilde F\in \Z[X,Y]$ (associated with $\Phi$)  as follows:
\begin{equation}\label{Ptilde}
\widetilde F(X,Y) = \dfrac{\partial F}{\partial X}(X,Y) + \dfrac{\partial F}{\partial Y}(X,Y)  \Phi(X,Y).
\end{equation}
Thus, we have that
\[ f'(x) = \widetilde F(x, \varphi(x)).\]

Due to the following result, in order to count the number of real zeros of a function $f$ as above,  we will assume from now on, without loss of generality, that ${\rm Res}_Y(F, \widetilde F) \ne 0$.

\begin{lemma}\label{lem:resP} Let $\Phi, \varphi$ be as in equation (\ref{defvarphi}) and let $F\in \Z[X, Y]$ with $\deg_Y(F) >0$.
  There exists a polynomial $P\in \Z[X, Y]$ such that ${\rm Res}_Y(P, \widetilde P) \ne 0$ and $P(x, \varphi(x))$ has the same real zeros as $F(x, \varphi(x))$. Moreover, the polynomial $P$ can be effectively computed from $F$ and $\Phi$.
 \end{lemma}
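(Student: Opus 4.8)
The plan is to extract from $F$ the "right" factor whose vanishing along the curve $y=\varphi(x)$ captures exactly the real zeros of $f$, after clearing the source of common zeros of $F$ and $\widetilde F$. First I would write $F = \textrm{cont}(F)\cdot F_0$ with $F_0\in\Z[X,Y]$ primitive in $Y$; since $\deg_Y(F)>0$ we have $\deg_Y(F_0)>0$. Note that $\textrm{cont}(F)(x)=0$ contributes only finitely many zeros and, more importantly, the real zeros of $F(x,\varphi(x))$ are those of $\textrm{cont}(F)(x)$ together with those of $F_0(x,\varphi(x))$; but $\varphi$ being analytic on an interval, at a zero $x_0$ of $\textrm{cont}(F)$ the function $F_0(x,\varphi(x))$ either also vanishes or not, so the zero set is not simply a disjoint union. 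The clean statement we actually need is just "same real zeros", so I would instead work directly towards making the $Y$-resultant nonzero by repeatedly dividing out an appropriate gcd.

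The key step is the following reduction. Suppose $R:=\Res_Y(F,\widetilde F)=0$ in $\Z[X]$ (here $F$ has $\deg_Y(F)>0$; if $\deg_Y(\widetilde F)$ is too small one treats it separately, but generically $\deg_Y(\widetilde F)=\deg_Y(F)-1+\deg_Y(\Phi)\ge \deg_Y(F)$, so one uses the resultant symmetrically or the subresultant gcd). Then $G:=\gcd_{\Z[X][Y]}(F,\widetilde F)$ has $\deg_Y(G)>0$. The crucial observation is that $f'(x)=\widetilde F(x,\varphi(x))$, so if $G(x,\varphi(x))$ vanished identically on a subinterval then both $F(x,\varphi(x))$ and its derivative would vanish there, forcing $f\equiv 0$ on that interval and hence (by analyticity) on the whole domain — a degenerate case we may exclude, or in which the statement is vacuous. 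Otherwise I claim $G(x,\varphi(x))$ has no real zeros that are not already zeros of $F(x,\varphi(x))$ (trivially, since $G\mid F$), and conversely every zero $x_0$ of $f$ with $f'(x_0)=0$ is a zero of $G(x,\varphi(x))$. Writing $F = G\cdot F_1$ in $\Q[X,Y]$ (clearing denominators to stay in $\Z[X,Y]$, which only changes things by a factor in $\Z[X]$, i.e. by finitely many extra real zeros — this is the subtle bookkeeping point), the real zeros of $f$ at which $f'\ne 0$ are exactly the real zeros of $F_1(x,\varphi(x))$ at which the analogous derivative is nonzero, plus possibly those coming from the $\Z[X]$-factor. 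Iterating on $F_1$ — whose $Y$-degree has strictly dropped — the process terminates, and at the end one collects the surviving $Y$-dependent factor $P$; the spurious $\Z[X]$-factors accumulated along the way are absorbed by multiplying $P$ by that integer polynomial, which changes neither the real-zero set of $P(x,\varphi(x))$ relative to $F(x,\varphi(x))$ in the required sense nor the conclusion $\Res_Y(P,\widetilde P)\ne 0$, since that resultant condition is checked on the primitive-in-$Y$ part.

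The computational claim is then immediate: computing $\textrm{cont}(F)$, the subresultant sequence of $F$ and $\widetilde F$ (hence their $\gcd$ in $\Z[X][Y]$) and the exact quotient $F_1$ are all effective polynomial operations, and the recursion depth is at most $\deg_Y(F)$, so $P$ is effectively computable from $F$ and $\Phi$ as asserted.

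The main obstacle I anticipate is the bookkeeping around the passage from $\Q[X,Y]$ to $\Z[X,Y]$ and the $\textrm{cont}(F)$ factors: one must verify carefully that each such $\Z[X]$-factor, when multiplied back into $P$, does not introduce a real zero of $P(x,\varphi(x))$ that fails to be a real zero of $F(x,\varphi(x))$ (and vice versa). The cleanest way around this is probably to define $P$ as $\textrm{cont}(F)$ times the terminal $Y$-primitive factor and to argue, using that $\varphi$ is analytic and $f\not\equiv 0$, that the zeros of $f$ split as the zeros of the primitive part together with the zeros of $\textrm{cont}(F)$ — so that reattaching exactly $\textrm{cont}(F)$ restores precisely the missing zeros and no others. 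Verifying $\Res_Y(P,\widetilde P)\ne 0$ for this $P$ reduces, via the product rule $\widetilde{(GH)} = \widetilde G\,H + \ldots$, to the fact that the terminal primitive factor is coprime with its own $\widetilde{\ }$, which is exactly the stopping condition of the recursion.
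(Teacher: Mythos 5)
Your overall strategy (divide out the $Y$-gcd of $F$ and $\widetilde F$, reattach the content) is the right one and is in the same spirit as the paper's proof, but there is a genuine gap at the central step, and it is not the $\Z$-versus-$\Q$ bookkeeping you flag at the end.

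The gap is this: after writing $F_0 = S\,U$ and $\widetilde F_0 = S\,V$ with $S=\gcd(F_0,\widetilde F_0)$, you need to show that \emph{every} real zero of $f_0(x)=F_0(x,\varphi(x))$, including the multiple ones, is a zero of $U(x,\varphi(x))$. You only observe the easy half: zeros of $f_0$ at which $f_0'\neq 0$ cannot be zeros of $S(x,\varphi(x))$ (since $S\mid\widetilde F_0$), hence must be zeros of $U(x,\varphi(x))$. But when you then ``iterate on $F_1=U$ and collect the surviving $Y$-dependent factor,'' the zeros of $f_0$ at which $f_0'=0$ — the ones you attribute to $S$ — have been divided out and are simply lost; reattaching $\textrm{cont}(F)$ does not recover them, since $S$ has positive $Y$-degree and is not a content factor. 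What closes the gap is a multiplicity argument: if $\xi$ is a zero of $f_0$ but not of $u:=U(x,\varphi(x))$, then ${\rm mult}(\xi,f_0)={\rm mult}(\xi,S(x,\varphi(x)))$, while $f_0'=S(x,\varphi(x))\,V(x,\varphi(x))$ forces ${\rm mult}(\xi,f_0')\ge {\rm mult}(\xi,S(x,\varphi(x)))={\rm mult}(\xi,f_0)$, contradicting that differentiation drops multiplicity by exactly one. This shows $f_0$ and $u$ have identical real zero sets, with no iteration needed.

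A second, smaller point: the iteration itself is unnecessary. If you assume at the outset (harmlessly) that $F$ is square-free, then the product rule $\widetilde{F_0}=\widetilde{(SU)}=\widetilde S\,U+S\,\widetilde U$ shows that any common $Y$-factor $T$ of $U$ and $\widetilde U$ divides $S\,V$; since $\gcd(U,V)$ is trivial, $T\mid S$, whence $T^2\mid F_0$, contradicting square-freeness. So ${\rm Res}_Y(U,\widetilde U)\neq 0$ already after one division, and one can take $P=\textrm{cont}(F)\,U$ directly. Your ``stopping condition of the recursion'' observation is pointing in this direction but does not supply the square-free hypothesis that makes the one-step argument go through.
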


\begin{proof}{Proof.} Without loss of generality, we may assume that $F$ is square-free. Suppose that ${\rm Res}_Y(F, \widetilde F)= 0$. Write $F = \textrm{cont}(F) \, F_0$. Then, ${\rm Res}_Y(F_0, \widetilde F_0)= 0$ and so, the greatest common divisor of $F_0$ and $\widetilde F_0$ is a polynomial $S \in \Z[X, Y]$ of positive degree in $Y$. If
$$F_0= S \, U \quad \hbox{ and } \quad \widetilde F_0 = S \,  V$$ for $U, V \in \Z[X, Y]$, we have that
\[ f_0(x) = F_0(x,\varphi(x))= S(x, \varphi(x)) \, U(x, \varphi(x)) \ \hbox{ and } \ f_0'(x) = \widetilde F_0(x,\varphi(x)) = S(x, \varphi(x)) \, V(x, \varphi(x)),\]
which implies that a zero $\xi$ of $f_0$ which is not a zero of $ U(x, \varphi(x))$ satisfies that ${\rm mult}(\xi, f_0) = {\rm mult}(\xi, S(x, \varphi(x))) \le {\rm mult}(\xi, f_0')$, leading to a contradiction. Then, $f_0$ and $U(x, \varphi(x))$ have the same zero set in $\mathbb{R}$.
As
$$\widetilde F_0 = \widetilde{(S \, U)} = \widetilde S \, U + S \, \widetilde U,$$
it follows that, if $T\in \Z[X,Y]$ is a common factor of $U$ and $\widetilde U$ with positive degree in $Y$, then $T$ divides $\widetilde F_0= S\,V$. Since $U$ and $V$ are relatively prime polynomials, then $T$ divides $S$ and, therefore $T^2 $ divides $F_0$, contradicting the fact that $F_0$ is square-free.

The lemma follows considering the polynomial $P= \textrm{cont}(F)\, U$.
\end{proof}

We will apply the theory of subresultants introduced in Section \ref{sec:preliminaries} in order to get Sturm sequences for $f$.

Let $$F_1 = {\rm Remainder}({\rm lc}(F)^{D} \widetilde F, F)\in \Z[X][Y],$$ where $D$ is the smallest even integer greater than or equal to $ 1+ \deg_Y(\widetilde F) - \deg_Y(F)$.

\begin{notation}\label{not:subres}
Following Section \ref{subsec:not}, for $i=0,\dots,N$, let $R_i:= {\rm SRes}_{n_i-1} \in \Z[X][Y]$  be the $(n_i-1)$th subresultant polynomial associated to $F$ and $F_1$, $\tau_i:=t_{n_i-1}\in \Z[X]$ be the leading coefficient of $R_i$ and, for $i=2,\dots, N+1$, let $\rho_i:=s_{n_i}\in \Z[X]$ be the $n_i$th subresultant coefficient of $F$ and $F_1$.
\end{notation}

\begin{definition}\label{def:signseq}
For an interval $I =(a,b)$  containing no root of the polynomials $\tau_i$ for $i=0,\dots, N$ or $\rho_i$ for $i=2,\dots, N+1$, we define inductively a sequence $(\sigma_{I,i})_{0\le i \le N}\in \{1,-1\}^{N+1}$  as follows:
\begin{itemize}
\item $\sigma_{I,0}=\sigma_{I,1}=1$,
\item $\sigma_{I,2} = (-1)^{\frac{1}{2}(\deg_Y(F)-\deg_Y(F_1))(\deg_Y(F)-\deg_Y(F_1)+1)} \textrm{sg}_I(\textrm{lc}(F_1))^{\deg_Y(F)-\deg_Y(F_1)+1}$,
\item $\sigma_{I,i+2}=  \textrm{sg}_I(\rho_{i+2} \tau_{i+1} \rho_{i+1} \tau_i) \sigma_{I, i}$,
    \end{itemize}
where, for a continuous function $g$ of a single variable with no zeros in $I$, $\textrm{sg}_I(g)$ denotes the (constant) sign of $g$ in $I$.
For $i=0, \dots, N$, we define $$F_{I,i} = \sigma_{I,i} R_i\in \Z[X,Y].$$
Finally, if $I$ is contained in the domain of $\varphi$, we introduce the sequence of Pfaffian functions $\mathbf{f}_I= (f_{I,i})_{0\le i\le N}$ defined by $$f_{I,i}(x) =F_{I,i}(x, \varphi(x)).$$
\end{definition}

\begin{proposition} \label{prop:sturmI} Let $F\in \Z[X,Y]$, $\deg_Y (F)>0$, and let $\varphi$ be a Pfaffian function satisfying  $\varphi'(x) = \Phi(x, \varphi(x))$, where $\Phi \in \Z[X,Y]$ with $\deg_Y(\Phi)>0$.
Consider the function $f(x) = F(x, \varphi(x))$. Let $\widetilde F\in \Z[X, Y]$ be defined as in (\ref{Ptilde}). Assume that the resultant ${\rm{Res}}_Y(F,\widetilde F)\in \Z[X]$ is not zero.
With the notation and assumptions of Definition \ref{def:signseq}, the sequence of Pfaffian functions $\mathbf{f}_I= (f_{I,i})_{0\le i\le N}$  is a Sturm sequence for $f$ in $I=(a,b)$.
\end{proposition}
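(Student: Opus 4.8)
The strategy is to verify the three defining conditions of a Sturm sequence (Definition \ref{defisturm}) for the sequence $\mathbf{f}_I$ on $I=(a,b)$, transferring the known subresultant/polynomial-remainder-sequence relations \eqref{euclid1}--\eqref{euclid2} from the polynomials $R_i(X,Y)$ to the Pfaffian functions $f_{I,i}(x)=F_{I,i}(x,\varphi(x))$. The key observation throughout is that evaluating $Y\mapsto\varphi(x)$ is a ring homomorphism, so every polynomial identity among the $R_i$, $\tau_i$, $\rho_i$ and the $C_i$ remains valid after substitution; the signs $\sigma_{I,i}$ are precisely the correction factors that turn the (only-up-to-sign) subresultant recurrences into genuine sign-alternation relations on $I$. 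I would also record at the outset that, since ${\rm Res}_Y(F,\widetilde F)\neq 0$, the remainder sequence terminates with $R_N$ a nonzero element of $\Z[X]$ (a gcd of $F$ and $F_1$ up to a factor in $\Z[X]$, and in fact a nonzero constant-in-$Y$ polynomial because $F,\widetilde F$ have nonzero resultant), and that $R_0=F$, $R_1=\pm F_1$ is proportional to $\widetilde F$ modulo $F$, so that $f_{I,1}$ is proportional on $I$ to a polynomial expression that agrees with $f'$ up to the substitution and a power of ${\rm lc}(F)$.

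\textbf{Condition (3).} Since $R_N\in\Z[X]$ is a nonzero polynomial and $I$ contains no root of $\tau_N={\rm lc}_Y(R_N)=R_N$ (the hypothesis of Definition \ref{def:signseq} on the $\tau_i$), the function $f_{I,N}(x)=\sigma_{I,N}R_N(x)$ has constant nonzero sign on $I$; hence $f_{I,N}(x)\neq 0$ for all $x\in I$.

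\textbf{Condition (2).} Fix $1\le i\le N-1$ and suppose $f_{I,i}(x)=0$ for some $x\in I$. Evaluating the subresultant recurrence \eqref{euclid2} at $Y=\varphi(x)$ gives
\[
s_{n_{i+2}}(x)\,t_{n_{i+1}-1}(x)\,R_{i-1}(x,\varphi(x)) \;=\; R_i(x,\varphi(x))\,C_i(x,\varphi(x)) \;-\; s_{n_{i+1}}(x)\,t_{n_i-1}(x)\,R_{i+1}(x,\varphi(x)),
\]
(with \eqref{euclid1} handling the base case $i=1$), so using $R_i(x,\varphi(x))=0$ we obtain
$\rho_{i+2}(x)\tau_{i+1}(x)\,f_{I,i-1}(x)\,\sigma_{I,i-1}^{-1} = -\,\rho_{i+1}(x)\tau_i(x)\,f_{I,i+1}(x)\,\sigma_{I,i+1}^{-1}$ after dividing by the appropriate $\sigma$'s. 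Since none of $\rho_{i+1},\rho_{i+2},\tau_i,\tau_{i+1}$ vanishes on $I$, neither $f_{I,i-1}(x)$ nor $f_{I,i+1}(x)$ is zero, and their product has sign $-\,\sigma_{I,i-1}\sigma_{I,i+1}\,{\rm sg}_I(\rho_{i+1}\rho_{i+2}\tau_i\tau_{i+1})$; plugging in the definition $\sigma_{I,i+1}={\rm sg}_I(\rho_{i+1}\tau_i\rho_{i}\tau_{i-1})\sigma_{I,i-1}$ — wait, one must instead use the shifted relation $\sigma_{I,(i-1)+2}={\rm sg}_I(\rho_{i+1}\tau_i\rho_i\tau_{i-1})\sigma_{I,i-1}$ to cancel the sign factors — and one checks that the product $f_{I,i-1}(x)f_{I,i+1}(x)$ comes out strictly negative. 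This is the bookkeeping heart of the argument: the $\sigma_{I,i}$ were defined exactly so that these signs collapse correctly, and I expect this sign-chasing to be the main (though routine) obstacle.

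\textbf{Condition (1).} This is the genuinely ``transcendental'' part. Suppose $f(y)=f_{I,0}(y)=0$ with $y\in I$. First, $f_{I,1}(y)\neq 0$: indeed, if both $F_0:=F$ and $F_1$ vanished at $(y,\varphi(y))$ then ${\rm SRes}_{e-1}$ would too, and iterating one would get $R_N(y,\varphi(y))=0$, contradicting Condition (3); more directly, $\gcd_Y(F,F_1)$ is constant in $Y$ by ${\rm Res}_Y(F,\widetilde F)\neq0$, so $F$ and $F_1$ have no common root, and $f_{I,1}(y)=0$ would force (via the remainder relation) $\widetilde F(y,\varphi(y))$ to share the root, i.e. would force a common zero of $F$ and $\widetilde F$. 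Hence $f_{I,1}$ has constant nonzero sign in a punctured neighbourhood of $y$. To pin down that this sign matches ${\rm sign}(f')$ near $y$, note that modulo $F$ we have ${\rm lc}(F)^{D}\widetilde F\equiv F_1$, so on the set where $F(x,\varphi(x))$ is small, $f_{I,1}(x)=\sigma_{I,1}F_1(x,\varphi(x))$ equals $\sigma_{I,1}{\rm lc}(F)(x)^{D}\widetilde F(x,\varphi(x))$ plus a multiple of $f(x)$; since $\sigma_{I,1}=1$, $D$ is even so ${\rm lc}(F)(x)^D>0$ on $I$, and $\widetilde F(x,\varphi(x))=f'(x)$, we get that $f_{I,1}(x)$ and $f'(x)$ have the same sign in a punctured neighbourhood of $y$. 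Because $f$ is analytic and $f'(y)\neq 0$ (as $f_{I,1}(y)\neq 0$), $f$ is strictly monotone near $y$: $f(x)f'(x)<0$ for $x$ just below $y$ and $>0$ for $x$ just above, which is exactly Condition (1) with $f_1:=f_{I,1}$. Assembling Conditions (1)--(3) gives that $\mathbf{f}_I$ is a Sturm sequence for $f$ on $I$, completing the proof.
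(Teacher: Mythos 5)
Your proof follows the same strategy as the paper: verify the three conditions of Definition \ref{defisturm} by evaluating the subresultant recurrences (\ref{euclid1})--(\ref{euclid2}) at $Y=\varphi(x)$ and using the sign-correction factors $\sigma_{I,i}$ to make the remainder relations into genuine sign-alternation relations on $I$. The one real gap is in Condition 2: you assert that ``since none of $\rho_{i+1},\rho_{i+2},\tau_i,\tau_{i+1}$ vanishes on $I$, neither $f_{I,i-1}(x)$ nor $f_{I,i+1}(x)$ is zero,'' but the evaluated recurrence (with $f_{I,i}(x)=0$) only yields that $f_{I,i-1}(x)$ and $f_{I,i+1}(x)$ are proportional with a nonzero factor, i.e.\ that they vanish \emph{together}. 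To rule out that both vanish one needs the chaining argument: if two consecutive $f_{I,j}$'s vanish at $x$, iterating the recurrences (again using nonvanishing of $\rho_i,\tau_i$) forces all of them, in particular $f_{I,N}$, to vanish, contradicting Condition 3 (equivalently, contradicting that $f_{I,0}$ and $f_{I,1}$ have no common zero in $I$). The paper makes this the explicit first step of its Condition 2 argument; you invoke essentially this idea only in your Condition 1 discussion, so it should be moved or repeated. Aside from this, the proof is sound: the observation that $D$ even, $\sigma_{I,1}=1$, and ${\rm lc}(F)$ of constant sign on $I$ make $f_{I,1}$ agree in sign with $f'$ at a zero of $f$ is exactly what the paper is using tersely, and the index slips in your shifted version of (\ref{euclid2}) (which you partly caught yourself) are cosmetic and do not affect the sign computation, which does collapse to $f_{I,i-1}(x)f_{I,i+1}(x)<0$ as you claim.
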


\begin{proof}{Proof.}
For simplicity, as the interval $I$ is fixed, the subindex $I$ will be omitted throughout the proof.

First we prove that $f_0$ and $f_1$ do not have common zeros in $I$. Suppose $\alpha\in I$  is a common zero of $f_0$ and $f_1$. Then $F(\alpha, \varphi(\alpha)) = 0$ and $F_1(\alpha, \varphi(\alpha)) = 0$; therefore, $\rho_{N+1}(\alpha) = \textrm{Res}_Y(F,F_1)(\alpha) = 0$, contradicting the assumptions on $I$.

From this fact, taking into account that $f_0 = f$, and $f_1$ has the same sign as $f'$ at any zero of $f$ lying in $I$, condition 1 of Definition \ref{defisturm} follows.

To prove that condition 2 holds, first note that if $f_j(\alpha) = 0$ and $f_{j+1}(\alpha)=0$
for some $\alpha\in I$, since $\rho_i$ and $\tau_i$ do not have zeros in $I$, by identities (\ref{euclid1}) and (\ref{euclid2}), $\alpha$ is a common zero of all $f_i$s, contradicting the fact that $f_0$ and $f_1$ do not have common zeros in $I$. Then, condition 2 in Definition \ref{defisturm} follows from the definition of the signs $\sigma_i$ and identities (\ref{euclid1}) and (\ref{euclid2}).

Condition 3 follows from the assumption that $\tau_{N}$, which equals $f_N$ up to a sign, does not have zeros in $I$.
\end{proof}

In order to count the number of zeros of a Pfaffian function in an open interval, provided that the function is defined in its endpoints, we introduce the following:

\begin{notation}
Let $f: J \to \mathbb{R}$ be a non-zero analytic function defined in an open interval $J\subset \mathbb{R}$ and let $c\in J$. We denote
\[ \sg(f, c^+) = \begin{cases} {\rm sign} (f(c)) & {\rm if } \ f(c) \ne 0\\
{\rm sign} (f^{(r)}(c)) &
{\rm if }\  {\rm{mult}}(c, f) = r
\end{cases}\]
and
\[ \sg(f, c^-) = \begin{cases} {\rm sign} (f(c)) & {\rm if } \ f(c) \ne 0\\
{\rm sign} ((-1)^{r} f^{(r)}(c)) &
{\rm if }\  {\rm{mult}}(c, f) = r
\end{cases}\]
where ${\rm{mult}}(c, f)$ is the multiplicity of $c$ as a zero of $f$.

For a sequence of non-zero analytic functions $\mathbf{f}= (f_0,\dots, f_N)$ defined in $J$, we write $v(\mathbf{f}, c^+)$ for the number of variations in sign  in $(\sg(f_0, c^+), \dots, \sg(f_N, c^+))$ and $v(\mathbf{f}, c^-)$ for the number of variations in sign in $(\sg(f_0, c^-), \dots, \sg(f_N, c^-))$.

\end{notation}

Note that $\sg(f, c^+)$ is the sign that $f$ takes in $(c, c+\varepsilon)$  and $\sg(f, c^-)$ is the sign that $f$ takes in $(c-\varepsilon, c)$ for a sufficiently small $\varepsilon>0$. Then, by Theorem \ref{resultadosturm}, we have:

\begin{proposition}
With the assumptions and notation of Proposition \ref{prop:sturmI}, if, in addition, the closed interval $[a,b]$ is contained in the domain of $\varphi$, the number of zeros of the function $f$ in the open interval $I=(a, b)$ equals
$v(\mathbf{f}_I, a^+) - v(\mathbf{f}_I, b^-)$.
\end{proposition}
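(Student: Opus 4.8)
The plan is to reduce the statement about the open interval $(a,b)$ to an application of Theorem \ref{resultadosturm} on a slightly shrunk closed subinterval, and then to let that subinterval exhaust $(a,b)$, using the interpretation of $\sg(f,c^+)$ and $\sg(f,c^-)$ as the sign of $f$ immediately to the right of $a$ and immediately to the left of $b$, respectively. Concretely, by Proposition \ref{prop:sturmI} the sequence $\mathbf{f}_I$ is a Sturm sequence for $f$ on $I=(a,b)$, so for any $a<c<d<b$ Theorem \ref{resultadosturm} gives that the number of distinct real zeros of $f$ in $(c,d]$ is $v(\mathbf{f}_I,c)-v(\mathbf{f}_I,d)$. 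First I would observe that, since each $f_{I,i}(x)=F_{I,i}(x,\varphi(x))$ is a non-zero analytic function on a neighbourhood of $[a,b]$ (the $R_i$ are non-zero polynomials, $\varphi$ is analytic, and $f_0=f$ is non-zero by the standing assumption ${\rm Res}_Y(F,\widetilde F)\neq 0$), each $f_{I,i}$ has only finitely many zeros in $[a,b]$; hence there is $\varepsilon_0>0$ such that none of the $f_{I,i}$ vanishes on $(a,a+\varepsilon_0)$ or on $(b-\varepsilon_0,b)$, and moreover $f$ itself has no zero in $(a,a+\varepsilon_0]$ nor in $(b-\varepsilon_0,b)$ unless that zero is $a$ or $b$ themselves — but $a,b\notin(a,b)$, so in any case the zeros of $f$ in $(a,b)$ all lie in the closed subinterval $[a+\varepsilon,b-\varepsilon]$ for $\varepsilon<\varepsilon_0$ small enough.

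Next I would fix such an $\varepsilon\in(0,\varepsilon_0)$ with $a+\varepsilon<b-\varepsilon$, set $c=a+\varepsilon$ and $d=b-\varepsilon$, and apply Theorem \ref{resultadosturm} to conclude that the number of distinct zeros of $f$ in $(c,d]$ equals $v(\mathbf{f}_I,c)-v(\mathbf{f}_I,d)$. By the choice of $\varepsilon_0$ all zeros of $f$ in $(a,b)$ lie in $(c,d]$ — indeed they lie in $[c,d]$, and $c$ is not a zero of $f$ — so the left-hand side is exactly the number of zeros of $f$ in $(a,b)$, which is what we want to count. It remains to identify $v(\mathbf{f}_I,c)$ with $v(\mathbf{f}_I,a^+)$ and $v(\mathbf{f}_I,d)$ with $v(\mathbf{f}_I,b^-)$. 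For this I would use the remark immediately preceding the proposition: for a non-zero analytic $g$, $\sg(g,a^+)$ is by definition the sign of $g$ on $(a,a+\delta)$ for $\delta$ small, and likewise $\sg(g,b^-)$ is its sign on $(b-\delta,b)$. Applying this to $g=f_{I,i}$ for each $i$ and recalling that for $\varepsilon<\varepsilon_0$ the point $c=a+\varepsilon$ lies in an interval where every $f_{I,i}$ has constant sign equal to $\sg(f_{I,i},a^+)$, we get $(f_{I,0}(c),\dots,f_{I,N}(c))$ has the same sign pattern as $(\sg(f_{I,0},a^+),\dots,\sg(f_{I,N},a^+))$, hence $v(\mathbf{f}_I,c)=v(\mathbf{f}_I,a^+)$; symmetrically $v(\mathbf{f}_I,d)=v(\mathbf{f}_I,b^-)$. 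Substituting gives the claimed formula.

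The only genuinely delicate point is making the ``small $\varepsilon$'' argument clean: one must be sure that the same $\varepsilon$ works simultaneously for all the finitely many functions $f_{I,0},\dots,f_{I,N}$ and for the conclusion that $(c,d]$ captures every zero of $f$ in $(a,b)$, and one must invoke that $f\not\equiv 0$ (from ${\rm Res}_Y(F,\widetilde F)\neq 0$, which forces $F(x,\varphi(x))$ to be not identically zero since otherwise $f$ and $f'$ would share infinitely many zeros) together with the identity theorem for analytic functions to guarantee finitely many zeros on a compact interval. I would also note that $\sg(f_{I,i},a^+)$ and $\sg(f_{I,i},b^-)$ are well defined precisely because each $f_{I,i}$ is non-zero analytic on a neighbourhood of $[a,b]$, so it has a well-defined multiplicity at $a$ and at $b$; this is where the hypothesis that $[a,b]$ — not merely $(a,b)$ — is contained in the domain of $\varphi$ is used. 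Once these finiteness and constancy-of-sign facts are in place, the rest is a direct translation between the two notations and a single application of Heindel's theorem, so no further computation is needed.
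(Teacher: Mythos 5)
Your proposal is correct and takes essentially the same route as the paper, which states this proposition as an immediate consequence of Theorem \ref{resultadosturm} together with the observation that $\sg(f,c^+)$ and $\sg(f,c^-)$ record the sign of $f$ immediately to the right (resp.\ left) of $c$. You have simply made explicit the standard shrink-to-a-compact-subinterval argument that the paper leaves implicit, choosing $\varepsilon_0$ so that all the finitely many functions $f_{I,i}$ have constant sign near $a$ and $b$ and then identifying $v(\mathbf{f}_I,a+\varepsilon)$ with $v(\mathbf{f}_I,a^+)$ and $v(\mathbf{f}_I,b-\varepsilon)$ with $v(\mathbf{f}_I,b^-)$.
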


As a consequence, we get a formula for the number of zeros of the Pfaffian function $f$ in any bounded interval:

\begin{theorem}
Let $f(x) = F(x, \varphi(x))$, where  $F\in \Z[X,Y]$, $\deg_Y (F)>0$,  and $\varphi$ is a Pfaffian function satisfying $\varphi'(x) = \Phi(x, \varphi(x))$ for a polynomial $\Phi \in \Z[X,Y]$ with $\deg_Y(\Phi)>0$. Assume ${\rm Res}_Y(F,\widetilde F)\ne 0$. Consider a bounded open interval $ (\alpha, \beta) \subset \mathbb{R}$ such that $[\alpha, \beta]$ is contained in the domain of $\varphi$.

Let $\rho_i$ and $\tau_i$ be the polynomials in $\Z[X]$ introduced in Notation \ref{not:subres}. If $\alpha_1<\alpha_2<\dots< \alpha_k$ are all the roots in $(\alpha, \beta)$  of  $\rho_i$ and $\tau_i$,  the number of zeros of $f$ in $[\alpha, \beta] $ equals
\[ \# \{ 0\le j\le k+1 : f(\alpha_j) = 0\} + \sum_{j=0}^k v(\mathbf{f}_{I_j}, \alpha_j^+) - v(\mathbf{f}_{I_j}, \alpha_{j+1}^-), \]
where $\alpha_0 = \alpha$, $\alpha_{k+1} = \beta$ and, for every $0\le j\le k$, $I_j = (\alpha_j, \alpha_{j+1})$ and $\mathbf{f}_{I_j}$ is the sequence of functions introduced in Definition \ref{def:signseq}.
\end{theorem}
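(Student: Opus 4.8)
The plan is to decompose the closed interval $[\alpha,\beta]$ at the finitely many points where the auxiliary polynomials $\rho_i$ and $\tau_i$ vanish, and then apply the preceding proposition on each open subinterval where the Sturm sequence $\mathbf{f}_{I_j}$ is well-defined. First I would observe that since $\mathrm{Res}_Y(F,\widetilde F)\neq 0$, all the $\rho_i$ and $\tau_i$ are nonzero polynomials in $\Z[X]$, so they have finitely many real roots; listing those lying in $(\alpha,\beta)$ as $\alpha_1<\cdots<\alpha_k$ and setting $\alpha_0=\alpha$, $\alpha_{k+1}=\beta$ gives a partition of $[\alpha,\beta]$ into the points $\alpha_0,\dots,\alpha_{k+1}$ together with the open intervals $I_j=(\alpha_j,\alpha_{j+1})$ for $0\le j\le k$. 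By construction, each $I_j$ contains no root of any $\tau_i$ or $\rho_i$, so Definition \ref{def:signseq} applies and, by Proposition \ref{prop:sturmI}, $\mathbf{f}_{I_j}$ is a genuine Sturm sequence for $f$ on $I_j$; moreover $[\alpha_j,\alpha_{j+1}]\subset[\alpha,\beta]$ lies in the domain of $\varphi$, so the previous proposition gives that the number of zeros of $f$ in the open interval $I_j$ is exactly $v(\mathbf{f}_{I_j},\alpha_j^+)-v(\mathbf{f}_{I_j},\alpha_{j+1}^-)$.

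Next I would assemble the global count. The zero set of $f$ in $[\alpha,\beta]$ is the disjoint union of the zeros in the open intervals $I_0,\dots,I_k$ and the zeros of $f$ among the partition points $\alpha_0,\dots,\alpha_{k+1}$. Summing the per-interval formula over $j=0,\dots,k$ accounts for the first part, and adding $\#\{0\le j\le k+1: f(\alpha_j)=0\}$ accounts for the second. Since these two contributions count disjoint sets of zeros and together exhaust $\{x\in[\alpha,\beta]: f(x)=0\}$, their sum is the total number of zeros of $f$ in $[\alpha,\beta]$, which is the claimed formula.

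The one point requiring care — the main (modest) obstacle — is bookkeeping at the endpoints $\alpha_j$ of consecutive subintervals: a point $\alpha_j$ with $1\le j\le k$ is the right endpoint of $I_{j-1}$ and the left endpoint of $I_j$, and one must make sure it is not double-counted and that, when $f(\alpha_j)\neq 0$, it contributes nothing spurious. This is handled automatically because the per-interval formula from the previous proposition counts zeros in the \emph{open} interval $I_j$ only, so the points $\alpha_j$ are uniformly excluded from all the $v(\mathbf{f}_{I_j},\cdot)$ differences and are instead collected exactly once in the term $\#\{0\le j\le k+1: f(\alpha_j)=0\}$. A small subtlety worth noting is that $f$ does not vanish identically on any $I_j$ (otherwise $F(x,\varphi(x))\equiv 0$, forcing, via analyticity of $\varphi$ and the hypothesis $\deg_Y(F)>0$ together with $\mathrm{Res}_Y(F,\widetilde F)\neq 0$, a contradiction), so $\mathbf{f}_{I_j}$ is a sequence of nonzero analytic functions and the quantities $v(\mathbf{f}_{I_j},\alpha_j^+)$ and $v(\mathbf{f}_{I_j},\alpha_{j+1}^-)$ are well-defined in the sense of the preceding Notation. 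With these observations in place, the theorem follows by finite summation, so the argument is essentially a reduction to the bounded-open-interval case already established.
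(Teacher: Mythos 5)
Your proof is correct and is exactly the argument the paper leaves implicit: the theorem is stated as ``a consequence'' of Proposition~\ref{prop:sturmI} and the proposition immediately preceding it, with no written proof, and your decomposition of $[\alpha,\beta]$ into the partition points $\alpha_0,\dots,\alpha_{k+1}$ and the open subintervals $I_j$, followed by applying the open-interval zero-count on each $I_j$ and adding the zeros at the partition points, is the intended route. The attention you pay to the non-double-counting at the shared endpoints $\alpha_j$ and to the non-degeneracy guaranteed by $\mathrm{Res}_Y(F,\widetilde F)\neq 0$ is exactly the right bookkeeping to make the ``consequence'' rigorous.
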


\section{Algorithmic approach}\label{sec:generalalgorithm}

Let  $\varphi$ be a Pfaffian function satisfying  $$\varphi'(x) = \Phi(x, \varphi(x))$$ for a polynomial $\Phi \in \Z[X,Y]$. Let $\delta_Y:=\deg_Y(\Phi)>0$ and $\delta_X:= \deg_X(\Phi)$.

In this section, we describe an algorithm for counting the number of zeros in a bounded interval contained in the domain of $\varphi$ of a function of the type
$$f(x) = F(x, \varphi(x)),$$
where $F\in \Z[X, Y]$ with $\deg_Y(F)>0$.

To estimate the complexity of the algorithm, we need an upper bound for the multiplicity of a zero of a function of this type. Here, we present a bound in our particular setting which takes into account the degrees in each of the variables $X$ and $Y$ of the polynomials involved in the definition of the functions.  A general upper bound on the multiplicity of Pfaffian intersections depending on the \emph{total} degrees of the polynomials can be found in \cite[Theorem 4.3]{GV04}. Even though both bounds are of the same order, our bound may be smaller when the total degrees are greater than the degrees with respect to each variable.

\begin{lemma}\label{multiplicity} With the previous notation, let $g(x) = G(x, \varphi(x))$ with $G\in \Z[X, Y]$ be a nonzero Pfaffian function. For every $\alpha\in \mathbb{R}$ such that $g(\alpha)=0$,  we have $${\rm{mult}}(\alpha, g)\le 2 \deg_X(G) \deg_Y(G) + \deg_X(G) (\delta_Y-1) + (\delta_X+1) \deg_Y(G).$$
\end{lemma}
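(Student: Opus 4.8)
The plan is to bound the multiplicity of $\alpha$ as a zero of $g(x) = G(x,\varphi(x))$ by reducing it to the vanishing of a resultant in $\Z[X]$ whose degree we can control. First I would observe that if $\alpha$ is a zero of $g$ of multiplicity $r$, then $g(\alpha)=g'(\alpha)=\dots=g^{(r-1)}(\alpha)=0$ but $g^{(r)}(\alpha)\neq 0$. Using the operator $G\mapsto \widetilde G$ from (\ref{Ptilde}), successive derivatives of $g$ are given by $g^{(k)}(x)=G_k(x,\varphi(x))$, where $G_0=G$ and $G_{k+1}=\widetilde{G_k}$. In particular, $\alpha$ and $\beta:=\varphi(\alpha)$ form a common zero of $G=G_0$ and $G_1=\widetilde G$, so that ${\rm Res}_Y(G,\widetilde G)(\alpha)=0$ (I would first handle, or reduce away, the degenerate cases where one of the polynomials vanishes identically in $Y$ or ${\rm Res}_Y(G,\widetilde G)\equiv 0$; the latter case is exactly the situation analyzed in Lemma \ref{lem:resP}, so after replacing $G$ by the polynomial $P$ produced there we may assume the resultant is a nonzero polynomial with the same real zero set, noting that this replacement only increases degrees in a controlled way — alternatively one argues directly that on the square-free part the bound transfers).

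The core estimate is then a degree count. I would relate $r$ to the order of vanishing of $\varphi$-values along the curve: more precisely, the key point is that $\alpha$ is a root of the polynomial $\rho(X):={\rm Res}_Y(G,\widetilde G)\in\Z[X]$, and its multiplicity as a root of $\rho$ controls (up to the relevant arithmetic) the multiplicity of $\alpha$ as a zero of $g$. The degree of $\widetilde G$ in $Y$ is at most $\deg_Y(G)-1+\delta_Y$ and in $X$ at most $\deg_X(G)-1+\delta_X$ (coming from the term $\tfrac{\partial G}{\partial Y}\Phi$, since $\tfrac{\partial G}{\partial X}$ has strictly smaller degrees). Hence ${\rm Res}_Y(G,\widetilde G)$, being a determinant of a Sylvester-type matrix of size $\deg_Y(G)+\deg_Y(\widetilde G)\le 2\deg_Y(G)+\delta_Y-1$ whose rows have $X$-degrees at most $\deg_X(G)$ (from the $G$-rows) and at most $\deg_X(G)+\delta_X-1$ (from the $\widetilde G$-rows), has $X$-degree at most $\deg_X(G)\cdot\deg_Y(\widetilde G)+(\deg_X(G)+\delta_X-1)\cdot\deg_Y(G)$, which rearranges to $2\deg_X(G)\deg_Y(G)+\deg_X(G)(\delta_Y-1)+(\delta_X+1)\deg_Y(G)$ — exactly the claimed bound. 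So the plan is: show ${\rm mult}(\alpha,g)\le {\rm mult}(\alpha,\rho)\le \deg_X(\rho)$, and then bound $\deg_X(\rho)$ by the Sylvester-matrix degree count above.

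The step that needs the most care — and which I expect to be the main obstacle — is the first inequality ${\rm mult}(\alpha,g)\le {\rm mult}(\alpha,\rho)$. Here I would use an argument in the spirit of Rolle's theorem / Khovanskii's estimates: if $g$ vanishes to order $r$ at $\alpha$, then along the analytic arc $x\mapsto(x,\varphi(x))$ the function and its first $r-1$ derivatives vanish at $\alpha$; translating this into the subresultant/remainder-sequence structure attached to $G$ and $\widetilde G$ (as set up in Section \ref{sec:preliminaries}), one shows $(X-\alpha)^{r}\mid {\rm Res}_Y(G,\widetilde G)$ after possibly passing to the square-free part of $G$ so that $G$ and $\widetilde G$ share no common factor in $Y$. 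Concretely: pass to $G$ square-free (so that ${\rm gcd}_Y(G,\widetilde G)$ has degree $0$ in $Y$, as in the proof of Lemma \ref{lem:resP}); then the specialization $X\mapsto\alpha$ must drop the gcd degree, and the order to which the resultant vanishes is at least the multiplicity of the common root $(\alpha,\beta)$, which in turn is at least $r$ because $G_1(\alpha,\beta)=\dots=G_{r-1}(\alpha,\beta)=0$ forces a high-order contact. Once this is in place the degree bound follows by the elementary determinant/Sylvester estimate, and the lemma is proved; I would write the gcd-degeneracy reduction carefully since that is where the square-free hypothesis and the definition of $\widetilde{(\cdot)}$ interact most delicately.
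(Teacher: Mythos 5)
The overall architecture you chose (bound $\mathrm{mult}(\alpha,g)$ via the resultant $\rho=\mathrm{Res}_Y(G,\widetilde G)$ and then bound $\deg_X(\rho)$ by a Sylvester-matrix degree count) is the same as the paper's, but the central inequality you rely on --- $\mathrm{mult}(\alpha,g)\le\mathrm{mult}(\alpha,\rho)$ --- is false. Take $\Phi=Y$ (so $\varphi(x)=e^x$, $\delta_X=0$, $\delta_Y=1$) and $G(X,Y)=Y-1-X$. Then $g(x)=e^x-1-x$ has a double zero at $\alpha=0$, while $\widetilde G=Y-1$ and $\rho(X)=\mathrm{Res}_Y(Y-1-X,Y-1)=X$ vanishes only to order $1$ at $0$. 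The correct statement is $\mathrm{mult}(\alpha,g)\le\mathrm{mult}(\alpha,\rho)+1$, and the paper obtains it by a Bezout identity $R=SG+T\widetilde G$ in $\Z[X,Y]$: specializing at $(x,\varphi(x))$ gives $R(x)=S(x,\varphi(x))g(x)+T(x,\varphi(x))g'(x)$, so the order of vanishing of $R$ at $\alpha$ is at least $\min(\mathrm{mult}(\alpha,g),\mathrm{mult}(\alpha,g'))=\mathrm{mult}(\alpha,g)-1$. Your proposed ``Rolle/Khovanskii'' argument that $(X-\alpha)^r\mid\rho$ cannot be made to work: the resultant detects that $G(\alpha,\cdot)$ and $\widetilde G(\alpha,\cdot)$ share a root, but not the order of contact of the analytic arc, and the example above already defeats it with $G$ square-free.

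There are two further problems. First, your degree accounting is off in two places that happen to cancel and mask the missing $+1$: $\deg_X(\widetilde G)\le\deg_X(G)+\delta_X$ (not $\deg_X(G)+\delta_X-1$, since $\partial G/\partial Y$ does not lower the $X$-degree), and your rearrangement of $\deg_X(G)\deg_Y(\widetilde G)+(\deg_X(G)+\delta_X-1)\deg_Y(G)$ actually produces $(\delta_X-1)\deg_Y(G)$ in the last term, not $(\delta_X+1)\deg_Y(G)$. With the correct degree for $\widetilde G$ one gets $\deg_X(\rho)\le 2\deg_X(G)\deg_Y(G)+\deg_X(G)(\delta_Y-1)+\delta_X\deg_Y(G)$, and the extra $\deg_Y(G)$ in the lemma's statement is precisely the room needed to absorb the $+1$ from the multiplicity inequality (assuming $\deg_Y(G)\ge 1$). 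Second, ``pass to the square-free part'' does not suffice for the general case, because it changes $\mathrm{mult}(\alpha,g)$: if $G=H^2$ then $\mathrm{mult}(\alpha,g)=2\,\mathrm{mult}(\alpha,h)$, so a bound for the square-free $H$ does not directly bound the multiplicity for $G$. The paper instead factors $G=\mathrm{cont}(G)\prod G_i^{m_i}$ into irreducibles, proves the bound for each irreducible $G_i$ (where $\mathrm{Res}_Y(G_i,\widetilde G_i)\ne 0$ automatically, so no reduction via Lemma \ref{lem:resP} is needed here), and sums $\mathrm{mult}(\alpha,g)=\mathrm{mult}(\alpha,c)+\sum m_i\,\mathrm{mult}(\alpha,g_i)$, using that the degrees of $G$ dominate the weighted sum of the degrees of the $G_i$.
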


\begin{proof}{Proof.}
Assume first that $G$ is irreducible in $\mathbb{Z}[X,Y]$. If $g(\alpha)=0$, then $\text{mult}(\alpha, g) > \text{mult} (\alpha, g')$. As $g'(x) = \widetilde G(x,\varphi(x))$, then $G$ does not divide $\widetilde G$ and, therefore, $R:=\text{Res}_Y(G, \widetilde G) \ne 0$. Let $S, T \in \mathbb{Z}[X,Y]$ be such that $R = S G +  T \widetilde G$. We have that
\[ R(x) = S(x,\varphi(x)) .\, g(x) +  T(x, \varphi(x)) .\, g'(x).\]
If $\alpha$ is a multiple root of $g$, the previous identity implies that $\text{mult}(\alpha, g) \le \text{mult}(\alpha, R) +1 \le \deg(R)+1$.
Taking into account that $\deg(R) \le \deg_X(G) \deg_Y(\widetilde G) + \deg_X(\widetilde G) \deg_Y(G)$, $\deg_X(\widetilde G ) \le \deg_X(G)+\delta_X$ and $\deg_Y (\widetilde G) \le \deg_Y(G)-1+\delta_Y$, we conclude that
\[\text{mult}(\alpha, g)\le 2 \deg_X(G) \deg_Y(G) + \deg_X(G) (\delta_Y-1) + \delta_X \deg_Y(G)+1.\]

In the general case, write $G = c(X) \prod_{1\le i \le t} G_i(X,Y)^{m_i}$, where $c(X)= \text{cont}(G)$ and $G_1,\dots, G_t\in \mathbb{Z}[X,Y]$ are irreducible polynomials. For every $i$, let $g_i(x)= G_i(x, \varphi(x))$. From the previous bound, we deduce
\[ \text{mult}(\alpha, g) = \text{mult}(\alpha, c) + \sum_{1\le i \le t} m_i \, \text{mult}(\alpha, g_i) \le  \]
\[\le \deg_X(c) + \sum_{1\le i \le t} m_i \left( 2 \deg_X(G_i) \deg_Y(G_i) + \deg_X(G_i) (\delta_Y-1) + \delta_X \deg_Y(G_i)+1 \right)\]
\[ \le 2 \deg_X(G) \deg_Y(G) + \deg_X(G) (\delta_Y-1) + (\delta_X+1) \deg_Y(G).\]
\end{proof}

The theoretical results in the previous section enable us to construct the following algorithm for zero counting for a function $f(x)= F(x, \varphi(x))$, where $F\in \Z[X,Y]$. By Lemma \ref{lem:resP}, we will assume that $\text{Res}_Y(F, \widetilde F) \ne 0$.

\bigskip

\noindent \hrulefill
\smallskip

\noindent\textbf{Algorithm \texttt{ZeroCounting}}

\medskip

\noindent INPUT: A function $\varphi$ satisfying a differential equation $\varphi'(x) = \Phi(x, \varphi(x))$, a polynomial $F\in \Z[X,Y]$ such that $\text{Res}_Y(F, \widetilde F) \ne 0$, and a closed interval $[\alpha,\beta]\subset\text{Dom}(\varphi)$ with $\alpha, \beta \in \Q$.

\smallskip
\noindent OUTPUT: The number of zeros of $f(x) = F(x, \varphi(x))$ in $[\alpha,\beta]$.

\begin{enumerate}
\item Let $F_1(X,Y):= \begin{cases}
\widetilde F(X,Y) & \text{ if } \deg_Y(\widetilde F) < \deg_Y(F)\\
\text{Remainder}({\rm lc}(F)^D \widetilde F, F) & \text{ otherwise }
\end{cases}$, where $D$ is the smallest even integer greater than or equal to $1 +\deg_Y(\widetilde F) - \deg_Y(F)$.

\item Compute the polynomials $R_i$ and $\tau_i$, for $0\le i \le N$, and $\rho_i$, for $2\le i \le N+1$,  associated to $F$ and $F_1$ as in Notation \ref{not:subres}.

\item Determine and order all the real roots $\alpha_1<\alpha_2<\cdots <\alpha_k$ lying in the interval $(a, b)$ of the polynomials $\tau_i$, for $0\le i \le N$, and $\rho_i$, for $2\le i \le N+1$.

\item For every $0\le j\le k$, compute the Sturm sequence $\mathbf{f}_{I_j}= (f_{I_j, i})_{0\le i \le N}$ for $f$ in $I_j= (\alpha_j, \alpha_{j+1})$ as in Definition \ref{def:signseq}, where $\alpha_0 = \alpha $ and $\alpha_{k+1} = \beta$.

\item Decide whether $f(\alpha_j) =0$ for every $0\le j\le k+1$ and count the number of zeros.

\item For every $0\le j \le k$, compute $v_j:=v(\mathbf{f}_{I_j}, \alpha_j^+) - v(\mathbf{f}_{I_j}, \alpha_{j+1}^{-})$.

\item Compute $\# \{ 0\le j\le k+1: f(\alpha_j) = 0\} +\sum\limits_{j=1}^k v_j$.

\end{enumerate}

\noindent \hrulefill

\bigskip

\emph{Complexity analysis:}

\smallskip

Let $d_X:= \deg_X(F)$, $d_Y:=\deg_Y(F)$ and, as before, $\delta_X:= \deg_X(\Phi)$, $\delta_Y:= \deg_Y(\Phi)$.

\begin{description}

\item[Step 1.] Note that $\deg_Y(F_1) < d_Y$. In the case when $\deg_Y(\widetilde F) \ge d_Y$, in order to bound $\deg_X(F_1)$, notice that $\deg_X({\rm lc}(F)^D\widetilde F) \le D \deg({\rm lc}(F)) + d_X + \delta_X$. Then, the polynomial $F_1$ can be obtained by means of at most $D$ successive steps, each consisting of subtracting a multiple of $F$ with degree in $X$ bounded by $(D-i) \deg_X({\rm lc}(F)) + (i+1) d_X + \delta_X$ from a polynomial whose degree in $X$ is bounded by $(D-i+1) \deg_X({\rm lc}(F)) + i\, d_X + \delta_X$. Then, $\deg_X(F_1) \le (D+1) d_X +\delta_X\le (\delta_Y+2)d_X +\delta_X$.

In order to perform the computations (as polynomials in the variable $Y$) avoiding division of coefficients (which are polynomials in $X$), we do not expand the product of the coefficients of $\widetilde F$ times ${\rm lc}(F)^D$ at the beginning, and at the $i$th step, we write each coefficient of the remainder as a multiple of ${\rm lc}(F)^{D-i}$. Thus, at each step, we compute at most $d_Y + \delta_Y$ polynomials in $X$: for the first $d_Y$ of them, we compute the difference of two products of a coefficient of $F$ (whose degree is at most $d_X$) by a polynomial of degree bounded by $(i+1)d_X +\delta_X$, and for the other ones, the product of the leading coefficient of $F$ by a polynomial of degree bounded by $(i+1)d_X +\delta_X$. Then, the overall complexity of this step is $O((d_Y+\delta_Y) d_X\delta_Y(\delta_Y d_X+\delta_X))$.

\item[Step 2.]
Each subresultant of $F$ and $F_1$ is a polynomial in the variable $Y$ whose coefficients are polynomials of degree bounded by $(d_Y-1) d_X + d_Y ((\delta_Y+2) d_X +\delta_X)$ in the variable $X$. We compute it by means of interpolation: for sufficiently many interpolation points, we evaluate the coefficients of $F$ and $F_1$, we compute the corresponding determinant  (which is a polynomial in $Y$ with constant coefficients) and, finally we interpolate to obtain each coefficient.

For each interpolation point, the evaluation of the coefficients of $F$ and $F_1$ can be performed within complexity $O(d_Y d_X + (d_Y-1)((\delta_Y+2) d_X+\delta_X)) = O(d_Y(\delta_Yd_X+\delta_X))$.
Then, we compute at most $2d_Y-1$ determinants of matrices of size bounded by $2d_Y-2$ within complexity $O(d_Y^{\omega+1})$, we multiply them by the polynomials $Y^jF$ or $Y^jF_1$ evaluated at the point and we add the results in order to obtain the specialization of the subresultant at the point, which does not modify the complexity order.
This is repeated for $d_Y ((\delta_Y+3) d_X+\delta_X)$ points. Finally, each of the at most $d_Y$ coefficients of the subresultant polynomial is computed by interpolation from the results obtained.
Each polynomial interpolation can be done within complexity $O(M(d_Y ( \delta_Y d_X +\delta_X)) \log(d_Y ( \delta_Y d_X +\delta_X)))$.
Then, the computation of the at most $d_Y$ coefficients of each subresultant  can be achieved within complexity
$O((d_Y(\delta_Yd_X+\delta_X) + d_Y^{\omega+1})d_Y(\delta_Yd_X+\delta_X)+
 d_Y M(d_Y(\delta_Y d_X +\delta_X)) \log(d_Y(\delta_Y d_X +\delta_X))) = O (d_Y^{\omega+2}(\delta_Y d_X +\delta_X)^2) $.

 As we have to compute at most $d_Y$ subresultants, the overall complexity of the computation of all the required subresultants is of order
 $O(d_Y^{\omega+3}(\delta_Y d_X +\delta_X)^2)$.

 Note that we may compute successively only the polynomials $R_i = {\rm SRes}_{n_i-1}$. The index $n_{i+1}$ indicating the next subresultant to be computed is the degree of $R_i$, and the polynomial $\tau_i$ is its leading coefficient.
Finally, the polynomials $\rho_i \in \Z[X]$ are subresultant coefficients of $F$ and $F_1$, which are also computed by interpolation. The complexity of these computations does not modify the order of the overall complexity of this step.

\item[Step 3.] Consider the polynomial
\begin{equation}\label{poliL}
L(X) = \prod_{0\le i\le N} \ \tau_i \prod_{3\le i \le N+1} \rho_i.
\end{equation}
Note that $\rho_{2} = (-1)^{\frac{1}{2}(\deg_Y(F)-\deg_Y(F_1))(\deg_Y(F)-\deg_Y(F_1)+1)}{\rm lc}(F_1)^{\deg_Y(F)-\deg_Y(F_1)}$; so, it has the same zeros as  $\tau_{1} = {\rm lc}(F_1)$.

We determine the Thom encodings of the roots of $L$ in the interval $(a,b)$ by computing the realizable sign conditions on
$ {\rm  Der}(L), X-\alpha, \beta-X$, where ${\rm Der}(L) =( L, L',\dots, L^{\deg(L)})$.

The degree of $L$ is bounded by $(2d_Y^2-d_Y) ((\delta_Y+3)d_X+\delta_X)$.
We compute its  coefficients by interpolation: the specialization of $L$ at a point can be computed within $O(d_Y^2(\delta_Yd_X+\delta_X))$ operations by specializing its factors and multiplying, and this is done for $\deg(L)+1$ points; then, the total complexity of evaluation and interpolation is of order $O(d_Y^4(\delta_Yd_X+\delta_X)^2)$. The complexity of computing the realizable sign conditions is of order $O(d_Y^6 (\delta_Y d_X + \delta_X)^3 \log^3(d_Y^2(\delta_Y d_X + \delta_X)))$ (see Lemma \ref{perrucci11}). Finally, we can order the roots of $L$ in $(\alpha, \beta)$ by comparing their Thom encodings (see \cite[Proposition 2.28]{BPR}) within complexity $O(d_Y^4 (\delta_Y d_X+\delta_X)^2 \log (d_Y^2 (\delta_Y d_X+\delta_X)))$ using a sorting algorithm.

The overall complexity of this step is of order $O(d_Y^6 (\delta_Y d_X + \delta_X)^3 \log^3(d_Y^2(\delta_Y d_X + \delta_X)))$.

\item[Step 4.] The Sturm sequences $(\mathbf{f}_{I_j})_{0\le j \le k}$ are obtained by multiplying the polynomials $(R_i)_{0\le i \le N}$ by the corresponding signs $(\sigma_{I_j, i})_{0\le i \le N}$ as stated in Definition \ref{def:signseq}.
Note that if $p$ is a univariate polynomial having a constant sign in $I_j=(\alpha_j, \alpha_{j+1})$, to determine this sign it suffices to determine $\sg(p, \alpha_j^+)$ or $\sg(p, \alpha_{j+1}^-)$, which can be obtained from the signs of $p$ and its successive derivatives at $\alpha_j$ or $\alpha_{j+1}$ respectively.

Then, in order to compute the required signs, we compute the realizable sign conditions on the family
\[ \text{Der}(L),X-\alpha, \beta-X,  \text{Der}(\rho_{i})_{3\le i \le N},   \text{Der}(\tau_{i})_{1\le i \le N-1}\]
which consists of $O(d_Y^2(\delta_Y d_X + \delta_X))$ polynomials of degrees bounded by $(2d_Y^2-d_Y) ((\delta_Y+3)d_X+\delta_X)$. The complexity of this computation is of order $O(d_Y^6 (\delta_Y d_X+\delta_X)^3 \log^3 (d_Y^2 (\delta_Y d_X+\delta_X)))$.
Going through the list of realizable sign conditions, we determine the signs $\sigma_{I_j, i}$ and, from them, the Sturm sequences $\mathbf{f}_{I_j}$ within the same complexity order.

\medskip

The overall complexity of Steps 1 -- 4 is of order $O(d_Y^6 (\delta_Y d_X+\delta_X)^3 \log^3 (d_Y^2 (\delta_Y d_X+\delta_X)))$.

\item[Steps 5 and 6.] These steps require the determination of the sign of Pfaffian functions of the type  $G(x, \varphi(x))$, with $G\in \Z[X,Y] $, at real algebraic numbers given by their Thom encodings (more precisely, at the real roots  $\alpha_j$ of $L$ lying on $(\alpha, \beta)$ and at the endpoints $\alpha$ and $\beta$ of the given interval). We assume an oracle is given to achieve this task.

    At Step 5, we need  $k+2\le \deg(L)+2 =O(d_Y^2 (\delta_Yd_X+\delta_X))$ calls to the oracle for the Pfaffian function defined by the polynomial $F$, having degrees $\deg_X(F) = d_X$ and $\deg_Y(F)= d_Y$.

    At Step 6, we use the oracle for Pfaffian functions defined by polynomials with degrees in $X$ bounded by $d_Y((\delta_Y+3)d_X+\delta_X)$ and degrees in $Y$ bounded by $d_Y$. Taking into account the bound for the multiplicity of a zero of such a function given by Lemma \ref{multiplicity}, it follows that the determination of  $\sg(f_{I_j, i}, \alpha_\ell^+)$ and $\sg(f_{I_j, i}, \alpha_\ell^-)$ requires at most $O(d_Y(d_Y+\delta_Y) (\delta_Yd_X+\delta_X))$ calls to the oracle.
    Then, the oracle is used at most $O(d_Y^4(d_Y+\delta_Y) (\delta_Yd_X+\delta_X)^2)$ times.

\end{description}

Therefore, we have the following:

\begin{proposition}
Let $f(x) = F(x, \varphi(x))$  be defined from a polynomial $F\in \Z[X,Y]$ and a Pfaffian function $\varphi$ satisfying $\varphi'(x) = \Phi(x, \varphi(x))$, where $\Phi \in \Z[X,Y]$ with $\deg_Y(\Phi)>0$. Let $d_X:= \deg_X(F)$, $d_Y:=\deg_Y(F)$, $\delta_X:= \deg_X(\Phi)$ and $\delta_Y:= \deg_Y(\Phi)$. Then, Algorithm \texttt{ZeroCounting} computes the number of zeros of $f$ in a closed interval $[\alpha, \beta]\subset \mbox{Dom}(\varphi)$  ($\alpha, \beta \in \Q$) within  $O(d_Y^6 (\delta_Y d_X+\delta_X)^3 \log^3 (d_Y^2 (\delta_Y d_X+\delta_X)))$ arithmetic operations and comparisons, and using at most $O(d_Y^4(d_Y+\delta_Y) (\delta_Yd_X+\delta_X)^2)$ calls to an oracle for determining the signs of Pfaffian functions of the type $G(x, \varphi(x))$, with $G\in \Z[X, Y]$, at  real algebraic numbers.
\end{proposition}

As a consequence of the previous algorithm we deduce an upper bound for the number of zeros  of the Pfaffian functions under consideration in a bounded interval:

\begin{corollary} \label{coro:numberofzeros} Let $f(x) = F(x, \varphi(x))$  be defined from a polynomial $F\in \Z[X,Y]$ and a Pfaffian function $\varphi$ satisfying $\varphi'(x) = \Phi(x, \varphi(x))$, where $\Phi \in \Z[X,Y]$ with $\deg_Y(\Phi)>0$. Let $d_X:= \deg_X(F)$, $d_Y:=\deg_Y(F)$, $\delta_X:= \deg_X(\Phi)$ and $\delta_Y:= \deg_Y(\Phi)$.
Then, for any open interval $I\subset {\rm Dom}(\varphi)$, the number of zeros of $f$ in $I$ is at most $(d_Y +1) (2d_Y^2-d_Y) ((\delta_Y+3)d_X+\delta_X)$.
\end{corollary}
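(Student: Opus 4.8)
The plan is to bound the number of zeros of $f$ in $I$ by splitting $I$ according to the real roots of the polynomials $\tau_i$ ($0\le i\le N$) and $\rho_i$ ($2\le i\le N+1$), and then applying the Sturm sequence machinery developed in Section \ref{sec:Sturm} on each subinterval free of such roots. First I would reduce to the case ${\rm Res}_Y(F,\widetilde F)\ne 0$ via Lemma \ref{lem:resP}, noting that this replacement does not change the real zero set of $f$ and only affects $d_X$, $d_Y$ by passing to a divisor, so the final bound (which is increasing in these parameters) still applies.

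Next, consider the polynomial $L(X)=\prod_{0\le i\le N}\tau_i\prod_{3\le i\le N+1}\rho_i$ introduced in \eqref{poliL} (recalling that $\rho_2$ has the same roots as $\tau_1$, so its roots are already accounted for). Let $\alpha_1<\dots<\alpha_k$ be the real roots of $L$ lying in $I$; these are at most $\deg(L)\le (2d_Y^2-d_Y)((\delta_Y+3)d_X+\delta_X)$ in number, using the degree bound established in the analysis of Step 3. Writing $I$ as the disjoint union of the points $\alpha_1,\dots,\alpha_k$ and the open subintervals $I_j$ determined by consecutive $\alpha_j$'s (together with the two unbounded or half-open pieces at the ends of $I$, or rather taking an exhausting sequence of closed subintervals of $I$ if $I$ is unbounded or its closure leaves the domain of $\varphi$), I would count zeros in each piece separately: at most $k$ zeros among the points $\alpha_j$ themselves, and, on each open interval $I_j$ where $\mathbf{f}_{I_j}$ is a Sturm sequence for $f$ by Proposition \ref{prop:sturmI}, at most $N+1\le d_Y+1$ zeros, since $v(\mathbf{f}_{I_j},\cdot)$ takes values in $\{0,1,\dots,N\}$ and the number of zeros in a subinterval of $I_j$ is a difference of two such values by Theorem \ref{resultadosturm}.

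Summing these contributions gives at most $k+(k+1)(d_Y+1)\le (d_Y+2)(\deg(L)+1)$, which I would then coarsen to the stated bound $(d_Y+1)(2d_Y^2-d_Y)((\delta_Y+3)d_X+\delta_X)$; here one should check that the elementary arithmetic works out, absorbing the "+1"'s into the generous leading constant (in the degenerate cases $N=0$ or $k=0$ the bound is immediate). The main obstacle is a bookkeeping one rather than a conceptual one: handling the case $I=\mathbb{R}$ or $I$ not relatively compact in ${\rm Dom}(\varphi)$, where Proposition \ref{prop:sturmI} was stated for a bounded open interval inside the domain. This is resolved by observing that the number of zeros of $f$ in $I$ is the supremum over closed subintervals $[\alpha,\beta]\subset I\cap{\rm Dom}(\varphi)$ of the number of zeros in $[\alpha,\beta]$, each of which is bounded as above by a quantity independent of $\alpha,\beta$; hence the same bound holds for $I$.
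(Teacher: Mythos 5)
Your decomposition of $I$ at the roots of $L$, the appeal to Proposition \ref{prop:sturmI} and Theorem \ref{resultadosturm} on each root-free subinterval $I_j$, the reduction via Lemma \ref{lem:resP} (noting that the replacement polynomial $P$ divides $F$ so the bound is monotone), and the compact-exhaustion argument for unbounded $I$ all match what the paper intends (the paper simply states the corollary as a consequence of Algorithm \texttt{ZeroCounting} without writing out a proof). So the approach is the right one.

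There is, however, an arithmetic slip that you flag but do not resolve, and as stated it does \emph{not} reach the target bound. A difference of two values in $\{0,1,\dots,N\}$ is at most $N$, not $N+1$, so each open piece $I_j$ contributes at most $N\le d_Y$ zeros (not $d_Y+1$). With the correct per-interval count, the total is at most $k+(k+1)N\le \deg(L)(d_Y+1)+d_Y$. Your coarser $k+(k+1)(d_Y+1)\le (d_Y+2)(\deg(L)+1)$ does \emph{not} collapse to $(d_Y+1)(2d_Y^2-d_Y)((\delta_Y+3)d_X+\delta_X)$: taking $\deg(L)$ near its stated upper bound makes $(d_Y+2)(\deg(L)+1)$ strictly exceed $(d_Y+1)\deg(L)_{\mathrm{bound}}$, so the ``absorbing the $+1$'s'' step fails as written. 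To actually land on the stated constant one needs both the tighter per-interval bound $N$ and the observation that the factorization of $L$ (whose first two factors $\tau_0={\rm lc}(F)$ and $\tau_1={\rm lc}(F_1)$ have smaller $X$-degree than the generic subresultant coefficient) leaves enough slack below $(2d_Y^2-d_Y)((\delta_Y+3)d_X+\delta_X)$ to absorb the extra additive $d_Y$. This is a bookkeeping repair, not a conceptual one, but the proof as written does not yet establish the claimed inequality.
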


An alternative bound can be obtained from Khovanskii's upper bounds for the number of non-degenerate zeros of univariate Pfaffian functions and for the multiplicity of an arbitrary zero of these functions (see \cite{GV04}). Keeping our previous notation, for a polynomial $F\in \Z[X,Y]$ with $\deg(F) =d$, if $\deg(\Phi) = \delta$, using Khovanskii's bounds, it follows that both the number of non-degenerate zeros and the multiplicity of an arbitrary zero of $f(x) = F(x, \varphi(x))$ are at most $d (\delta +d)$. We can get an upper bound for the total number of zeros of $f$ by bounding the number of \emph{non-degenerate} zeros of $f$ and of its successive derivatives of order at most $d(\delta +d)-1$.

Following (\ref{Ptilde}), we have that $f'$ is defined by a polynomial of degree at most $d+\delta-1$ and so, for every $k\in \N$, $f^{(k)}$ is given by a polynomial of degree at most $d +k (\delta - 1)$. Then, the total number of zeros of $f$ is at most
$$\sum_{k=0}^{d(\delta+d)-1} (d+k(\delta-1))(\delta +d+k(\delta-1))\le \dfrac12 d^3 \delta^2 (\delta+d)^3.$$
Note that the bound from Corollary \ref{coro:numberofzeros} is of lower order than this one.

\section{E-polynomials} \label{sec:Epolynomials}

In this section, we will deal with the particular case of $E$-polynomials, namely when $\varphi(x) = e^{h(x)}$ for a polynomial $h\in \Z[X]$ of positive degree.
We will first show how to perform steps 5 and 6 of Algorithm  \texttt{ZeroCounting} (that is, we will give an algorithmic procedure to replace the calls to an oracle). Finally, we will prove a bound for the absolute value of the zeros of an $E$-polynomial.

\subsection{Sign determination}

The main goal of this section is to design a symbolic algorithm which determines the sign that an $E$-polynomial takes at a real algebraic number given by its Thom encoding.
To do this, we will use two subroutines. The first one, which follows \cite[Lemma 15]{Vor92}, determines the sign of an expression of the form $e^\beta- \alpha$ for real algebraic numbers $\alpha$ and $\beta$. The second one allows us to locate a real number of the form $e^{h(\alpha)}$, for a real algebraic number $\alpha$,  between two consecutive real roots of a given polynomial.

\medskip

\noindent \hrulefill

\smallskip

\noindent \textbf{Algorithm \texttt{SignExpAlg}}

\medskip

\noindent INPUT: Real algebraic numbers $\alpha$ and $\beta$ given by their Thom encodings $\sigma_{P_1} (\alpha)$ and $\sigma_{P_2} (\beta)$ with respect to polynomials $P_1, P_2 \in \Z[X]$ such that $\deg(P_1), \deg(P_2)\le d$ $(d\ge 2)$ and $H(P_1), H(P_2) \le H$.

\medskip

\noindent OUTPUT: The sign $s:= \textrm{sign}(e^\beta - \alpha)$.

\begin{enumerate}
\item Let $c:= (2^{d+1} (d+1)H)^{-2^{41} d^6 (5 d + 4\lceil \log(H) \rceil)}$.

\item Compute $w\in \Q$ such that $|e^\beta - w| <c$ as follows:
\begin{enumerate}
\item Compute $w_1\in \Q$ such that $|\beta -w_1|< \dfrac{c}{2.\, 3^{H+2}}$
\item Compute $w\in \Q$ such that  $|e^{w_1} - w| <\dfrac{c}{2}$
\end{enumerate}
\item Compute $s=\text{sign}(w-\alpha ) $.
\end{enumerate}
\noindent \hrulefill

\bigskip

\emph{Proof of correctness and complexity analysis:}

\begin{description}

\item[Step 1.] We will show that, for the chosen value of $c$, the inequality $|e^\beta - \alpha|>c$ holds.

As shown in \cite{Wald78}, if $\alpha$ and $\beta$ are algebraic numbers of degrees bounded by  $\theta$ and heights bounded by $\nu$, then
\[ |e^\beta - \alpha| > e^{-2^{42} \theta^6 \ln(\nu +e^e) (\ln (\nu)+\ln\ln(\nu))}\]

Note that
\[ e^{2^{42} \theta^6 \ln(\nu +e^e) (\ln (\nu)+\ln\ln(\nu))} \le (\nu +16 )^{2^{42} \theta^6 (\ln(\nu)+\ln\ln(\nu))}\le (\nu +16 )^{2^{43} \theta^6 \ln(\nu)}\]

It is clear that the degree of an algebraic number is bounded by the degree of any polynomial which vanishes at that number. With respect to the height, by \cite[Propositions 10.8 and 10.9]{BPR}, we have
\[ H(\alpha) \le 
 2^d ||P_1|| \le
2^d (d+1)^{1/2} H,\]
and, similarly, it follows that the same bound holds for $H(\beta)$. Here, $ ||P_1||$ stands for the norm $2$ of the vector of the coefficients of $P_1$.

The required inequality is deduced by taking  $\theta=d$, $\nu=2^d (d+1)^{1/2} H$, and using the bounds
\[2^d (d+1)^{1/2} H +16 \le 2^{d+1} (d+1) H \ \hbox { and } \ \ln(2^d (d+1)^{1/2} H) \le \dfrac{5}{4} d +\lceil \log(H)\rceil.\]

\item[Step 2.(a)] Applying the algorithm from Lemma \ref{intervalsforroots} to the polynomial $P_2$  with $\epsilon = \dfrac{c}{3^{H+3}}$, we get intervals $I_j=(a_j, b_j]$ with $a_j, b_j\in \Q$ and $b_j-a_j<\epsilon$ $(1\le j\le \kappa)$ such that $\beta \in I_{j_0}$ for some $j_0$. We determine the index $j_0$ by computing the feasible sign conditions for $\text{Der}(P_2), X-a_1, X-b_1,\dots, X-a_\kappa, X-b_\kappa$. Finally, we take $w_1 = b_{j_0}$. The complexity of this step is of order $O(d^3 (\log(H. 3^{H+3} .c^{-1}) + \log^3(d)))= O(d^3 H + d^{9}(d+\log(H))^2)$.

By the mean value theorem, the inequality $|\beta -w_1|< \dfrac{c}{2.\, 3^{H+2}}$ implies that $|e^\beta - e^{w_1}|<\dfrac{c}{2}$.

\item[Step 2.(b)] Following \cite[Lemma 14]{Vor92}, in order to obtain $w$, we compute the Taylor polynomial centered at $0$ of the function $e^x$ of order $t:= 8(\lceil \log(2/c)\rceil +1+H)$ specialized in $w_1$. The complexity of this step is bounded by $O(d^7(d+\log(H))^2 +H)$.

\item[Step 3.] The fact that $\text{sign}(w-\alpha)=\text{sign}(e^\beta - \alpha ) $ is  a consequence of the inequalities $|e^\beta -\alpha|>c$ and $|e^\beta -w|<c$.
 In order to determine this sign, we compute the feasible sign conditions on $\text{Der}(P_1), X-w$ and look for the one which corresponds to the Thom encoding of $\alpha$. The complexity of this step is of order $O(d^3 \log^3(d))$.

\end{description}

The overall complexity of this subroutine is $O(d^3 H + d^{9}(d+\log(H))^2)$.

\bigskip

The second subroutine is the following:

\noindent \hrulefill

\smallskip

\noindent \textbf{Algorithm \texttt{RootBox}}

\medskip

\noindent INPUT: A polynomial $h\in \Z[X]$, an algebraic number $\alpha\in \mathbb{R}$ such that $h(\alpha) \ne 0$,  given by its Thom encoding as a root of a polynomial $L\in \Z[X]$, and a polynomial $M\in \Z[X]$ together with the ordered list of Thom encodings of all its real roots $\lambda_1<\lambda_2<\dots < \lambda_m$.

\medskip

\noindent OUTPUT: The index $i_0$, $0\le i_0 \le m$, such that $\lambda_{i_0} < e^{h(\alpha)} <\lambda_{i_0+1}$, where $\lambda_0 = -\infty$ and $\lambda_{m+1} = +\infty$.

\begin{enumerate}
\item Compute $S(T):= \text{Res}_X(L(X), T- h(X))$.
\item Compute the feasible sign conditions on $\text{Der}(L), S(h), S'(h), \dots, S^{(\deg(S))}(h)$ and the Thom encoding of $h(\alpha)$ as a root of $S$.
\item Compute $\text{sign}(e^{h(\alpha)}- \lambda_i)$ applying Algorithm \texttt{SignExpAlg}, for $i=1,\dots, m$, until the first negative sign is obtained for $i_0$. If all the signs are positive, $i_0= m$.
\end{enumerate}

\noindent \hrulefill

\bigskip

\emph{Proof of correctness and complexity analysis:}

\medskip

Note that $h(\alpha)$ is a root of the polynomial $S\in \Z[T]$ computed in Step 1. Therefore, in Step 2, the sign condition on $\text{Der}(L), S(h), S'(h), \dots, S^{(\deg(S))}(h)$ having the Thom encoding of $\alpha$ as a root of $L$ in the first coordinates has the Thom encoding of $h(\alpha)$ as a root of $S$ in the last ones.

Assume that $\deg(L)\le \ell$, $\deg(h)\le \delta$ and $\deg(M)\le \eta$.

The resultant computation in Step 1 can be done within complexity $O(\ell(\ell +\delta)^{\omega})$ by interpolation, noticing that $\deg(S)\le \ell$.
Applying Lemma \ref{perrucci11}, the complexity of Step $2$ is $O(\ell^3 \delta\log(\ell)\log^2(\ell\delta))$. Finally, taking into account that $H(S) \le (\ell+\delta)!\,  H(L)^\delta (2H(h))^\ell $, defining
\[\mathcal{H}:= \max \{ H(M),  (\ell +\delta)!\,  H(L)^\delta (2H(h))^\ell \},\] the complexity of Step 3 is $O\Big(m \max\{\eta,\ell\}^3 \Big( \mathcal{H} +\max\{\eta,\ell\}^6 ( \max\{\eta,\ell\} + \log(\mathcal{H}))^2 \Big)\Big)$.

The overall complexity of the algorithm is of the same order as the complexity of Step 3.

\bigskip

Now we are ready to introduce the main algorithm of this section.

\noindent \hrulefill

\noindent \textbf{Algorithm \texttt{E-SignDetermination}}

\medskip
\noindent INPUT: Polynomials $G\in \Z[X,Y]$, $h\in \Z[X]$, $\deg(h)>0$, $L\in \Z[X]$ and Thom encodings $\sigma_L(\alpha_1), \dots, \sigma_L(\alpha_t)$ of real roots $\alpha_1,\dots, \alpha_t$ of $L$.

\medskip
\noindent OUTPUT: The signs of $G(\alpha_j, e^{h(\alpha_j)})$ for $1\le j \le t$.

\begin{enumerate}

\item For every $1\le j \le t$, determine whether $G(\alpha_j, Y) \equiv 0$. If this is the case, the sign of $G(\alpha_j, e^{h(\alpha_j)})$ is $0$.

\item Compute $R= {\rm gcd}(L,h)$ and the list of realizable sign conditions on $\textrm{Der}(L), R, G(X,1)$. Going through the list, determine the sign of $G(\alpha_j, e^{h(\alpha_j)}) = G(\alpha_j, 1)$ for every $j$ such that $G(\alpha_j, Y) \not \equiv 0$ and $R(\alpha_j)=0$.

\item Compute $M (Y):= \text{Res}_X(L(X), G(X,Y))$.

\item Compute the Thom encodings of the real roots of $M$ and order them: $\lambda_1<\dots< \lambda_m$.

\item For every $1\le j\le t$ such that $G(\alpha_j, Y) \not \equiv 0$ and $R(\alpha_j)\ne 0$:

\begin{enumerate}

\item Determine the index $0\le i_j\le m$ such that $\lambda_{i_j}< e^{h(\alpha_j)}< \lambda_{i_j+1}$ by applying subroutine \texttt{RootBox}, where $\lambda_0:=-\infty$ and $\lambda_{m+1}:=+\infty$.

\item Find $w_j \in \Q\cap (\lambda_{i_j}, \lambda_{i_j+1})$.

\item Compute the sign of the polynomial $G(X, w_j)$ at $X=\alpha_j$. This is the sign of $G(\alpha_j, e^{h(\alpha_j)})$.
\end{enumerate}

\end{enumerate}

\noindent \hrulefill

\bigskip

\emph{Proof of correctness and complexity analysis:}

\medskip

Assume that $\deg_X(G)\le d_X$, $\deg_Y(G)\le d_Y$, $\deg(L)\le \ell$ and $\deg(h)\le \delta$.

\smallskip

Due to Lindemann's theorem,
if $\alpha\in \mathbb{R}$ is an algebraic number and $h(\alpha) \ne 0$, then
$e^{h(\alpha)}$ is transcendental over $\Q$. Therefore, for an algebraic number
$\alpha\in \mathbb{R}$, $G(\alpha, e^{h(\alpha)}) = 0$  if and only if either
$G(\alpha, Y) \equiv 0$ or $h(\alpha)=0$ and $G(\alpha, 1) =0$.
Then, Steps 1 and 2 enable us to determine all the indices $j$ such that $G(\alpha_j, e^{h(\alpha_j)}) = 0$.

\begin{description}

\item[Step 1.] Compute $\textrm{cont}(G)$, the gcd of the coefficients of $G$, by applying successively the fast Euclidean algorithm \cite[Algorithm 11.4]{vzGG} within complexity $O(d_Y M(d_X) \log(d_X))$. Then, determine the realizable sign conditions on $\textrm{Der}(L), \textrm{cont}(G)$ within
$O(\ell^2 \max\{\ell, d_X\} \log(\ell)$ $ \log^2(\max\{\ell, d_X\}))$ arithmetic operations.

\item[Step 2.] The complexity of the computation of $R$ is of order $O(M(\max\{\ell, \delta\}) \log(\max\{\ell, \delta\}))$ and the realizable sign conditions on $\textrm{Der}(L), R, G(X,1)$ can be found within complexity $O(\ell^2 \max\{\ell, d_X\} \log(\ell) \log^2(\max\{\ell, d_X\}))$.

\item[Step 3.] In order to compute $M(Y)$, evaluate $G(X,y)$ at sufficiently many values $y$,
compute the corresponding determinants and interpolate.
Taking into account that $\deg(M)\le \ell d_Y$, the total cost of this step is of order $O(\ell d_Y (d_X +\ell)^\omega + M( \ell d_Y) \log(\ell d_Y))$.

\item[Step 4.] The computation of the Thom encodings of the real roots of $M$ can be done within $O((\ell d_Y)^3 \log^3(\ell d_Y))$ operations. Then, we order the real roots of $M$ by means of their Thom encodings within complexity of order $O((\ell d_Y)^2 \log(\ell d_Y))$.

\item[Step 5.] Following the proof of \cite[Proposition 8.15]{BPR}, it follows that $H(M) \le (\ell + d_X)! ((d_Y+1) H(G))^\ell H(L)^{d_X}$. Recall that $\deg(M)\le \ell d_Y$.

\begin{description}
\item[(a)] The complexity of this step is
$O((\ell d_Y)^4 (\mathcal{H} + (\ell d_Y)^6 ( \ell d_Y + \log(\mathcal{H}))^2 ) )$, where
$\mathcal{H} = \max \{ (\ell+\delta)! H(L)^{\delta} (2H(h))^\ell , (\ell+d_X)! H(L)^{d_X} ((d_Y+1)H(G))^\ell \}$.

\item[(b)] By applying Lemma \ref{intervalsforroots} to the polynomial $M$ and a lower bound $\epsilon$ for the minimum distance between two different roots of $M$, we obtain pairwise disjoint intervals $(a_i, b_i]$ with rational endpoints such that $\lambda_i\in (a_i, b_i]$ for $i=1,\dots, m$. Following Lemma \ref{separation}, we can take $\epsilon = (\ell d_Y)^{-\frac{\ell d_Y+2}{2}} (\ell d_Y+1)^{\frac{1-\ell d_Y}{2}}
((\ell+d_X)! \,  H(L)^{d_X} ((d_Y+1)H(G))^{\ell} )^{1-\ell d_Y}$.  Let $w_j := b_{i_j}$.

The complexity of this step is $O( (\ell d_Y)^4 ( (\ell +d_X) \log(\ell +d_X) + \ell (\log(H(G)) + \log(d_Y)) + d_X \log(H(L))))$.

\item[(c)] We compute the coefficients of $G(X, w_j)$ within complexity $O(d_X d_Y)$. Then, we compute the feasible sign conditions of $\text{Der}(L), G(X, w_j)$, which enable us to determine the sign of $G(\alpha_j, w_j)$, within $O(\ell^2 \max\{ \ell , d_X\} \log(\ell) \log^2(\max\{ \ell , d_X\})))$ additional operations.
\end{description}

\end{description}

The overall complexity of the algorithm is $O(t (\ell d_Y)^4 (\mathcal{H} + (\ell d_Y)^6
( \ell d_Y + \log(\mathcal{H}))^2 ))$.

\bigskip

The previous complexity analysis leads to:

\begin{proposition}
Given polynomials $G\in \Z[X,Y]$, $h\in \Z[X]$, $\deg(h)>0$, $L\in \Z[X]$ with degrees bounded by $d$ and height bounded by $H$, and Thom encodings $\sigma_L(\alpha_1), \dots, \sigma_L(\alpha_t)$ of real roots $\alpha_1,\dots, \alpha_t$ of $L$, we can determine $\#\{1\le  j\le t : G(\alpha_j, e^{h(\alpha_j)}) = 0\}$ within complexity $O(d^3 \log^3(d))$. Moreover, the signs of $G(\alpha_j, e^{h(\alpha_j)})$, for $1\le j \le t$, can be computed within complexity $O(t \, 8^{d} d^{3d+8} H^{2d})$.
\end{proposition}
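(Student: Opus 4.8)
The plan is to read off the complexity of \texttt{E-SignDetermination} from the step-by-step analysis already carried out, specializing the parameters $\deg_X(G), \deg_Y(G), \deg(L), \deg(h)$ all to the common bound $d$ and the heights $H(G), H(L), H(h)$ to the common bound $H$. First I would handle the zero-counting claim: by the correctness discussion, $G(\alpha_j, e^{h(\alpha_j)}) = 0$ exactly when $G(\alpha_j,Y)\equiv 0$ or both $h(\alpha_j)=0$ and $G(\alpha_j,1)=0$, and this is detected entirely in Steps 1 and 2. Substituting $\ell, \delta, d_X, d_Y \le d$ into the Step 1 bound $O(d_Y M(d_X)\log(d_X) + \ell^2\max\{\ell,d_X\}\log(\ell)\log^2(\max\{\ell,d_X\}))$ and the Step 2 bound $O(M(\max\{\ell,\delta\})\log(\max\{\ell,\delta\}) + \ell^2\max\{\ell,d_X\}\log(\ell)\log^2(\max\{\ell,d_X\}))$ gives $O(d^3\log^3(d))$ in both cases (using $M(d)=d\log d\log\log d = O(d\log^2 d)$), which yields the first assertion.

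For the second assertion, I would isolate the dominant term. The overall complexity of the algorithm was shown to be $O(t(\ell d_Y)^4(\mathcal{H} + (\ell d_Y)^6(\ell d_Y + \log\mathcal{H})^2))$, where $\mathcal{H} = \max\{(\ell+\delta)!\,H(L)^\delta(2H(h))^\ell,\ (\ell+d_X)!\,H(L)^{d_X}((d_Y+1)H(G))^\ell\}$. Setting all degrees to $d$ and all heights to $H$, we get $(\ell d_Y)^4 = d^8$ and, for the factorial, $(\ell+\delta)! \le (2d)! \le (2d)^{2d}$, while $H(L)^\delta(2H(h))^\ell \le H^d(2H)^d = 2^d H^{2d}$; similarly the second term in the max is bounded by $(2d)^{2d}H^d((d+1)H)^d \le (2d)^{2d}(d+1)^d H^{2d}$. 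Absorbing the polynomial-in-$d$ factors, one obtains $\mathcal{H} = O((2d)^{2d}(d+1)^d H^{2d})$, and since this term dominates the additive $(\ell d_Y)^6(\ell d_Y+\log\mathcal{H})^2 = O(d^8(d + d\log(dH))^2)$ whenever $d \ge 2$ (the latter being only polynomial in $d$ and $\log H$), the bracket is $O(\mathcal{H})$. Multiplying through by $t\cdot d^8$ and coarsening $(2d)^{2d}(d+1)^d d^8 = O(8^d d^{3d+8})$ — here $(2d)^{2d} = 4^d d^{2d}$, $(d+1)^d \le (2d)^d = 2^d d^d$ so the product of the $d$-power parts is $8^d d^{3d}$, and the remaining $d^8$ is explicit — gives the claimed bound $O(t\,8^d d^{3d+8}H^{2d})$.

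The only delicate point, and the main thing I would be careful about, is verifying that the $\mathcal{H}$ term genuinely dominates the additive correction $(\ell d_Y)^6(\ell d_Y + \log\mathcal{H})^2$ after specialization, i.e. that $\log\mathcal{H} = O(d\log(dH))$ is small compared to $\mathcal{H}$ itself; this is immediate since $\mathcal{H} \ge H^{2d} \ge 1$ grows at least exponentially in $d$ while $\log\mathcal{H}$ is linear in $d$ (up to a $\log(dH)$ factor), so $(\ell d_Y)^6(\ell d_Y+\log\mathcal{H})^2 = d^{O(1)}\log^2(dH) = O(\mathcal{H})$. With this in hand, the estimate $t\cdot d^8\cdot O(\mathcal{H}) = O(t\,8^d d^{3d+8}H^{2d})$ follows by the coarsening above, completing the proof.
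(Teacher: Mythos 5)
Your proof is correct and follows exactly the route the paper intends: the proposition is simply the step-by-step complexity analysis of \texttt{E-SignDetermination} specialized to the common bounds $d$ and $H$, with Steps 1--2 giving the $O(d^3\log^3 d)$ count and the dominant term $t(\ell d_Y)^4\mathcal{H}$ giving the $O(t\,8^d d^{3d+8}H^{2d})$ sign bound via the same factorial and power coarsenings. (You have a harmless slip writing $(\ell d_Y)^6(\ell d_Y+\log\mathcal{H})^2 = O(d^8(d+d\log(dH))^2)$ where it should be $O(d^{12}(d^2+d\log(dH))^2)$ since $\ell d_Y=d^2$, but this term is still polynomial in $d$ and $\log H$ and hence dominated by $\mathcal{H}$, so the conclusion is unaffected.)
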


\subsection{Zero counting for $E$-polynomials}

Here, we will apply Algorithm \texttt{E-SignDetermination} from the previous section as a subroutine in
Algorithm \texttt{ZeroCounting} described in Section \ref{sec:generalalgorithm}
to obtain a zero counting algorithm for
$E$-polynomials with no calls to oracles.

In order to estimate complexities we will need upper bounds for the degrees and heights of
polynomials defining the successive derivatives of an $E$-polynomial.

\begin{remark}\label{cotas}
For a Pfaffian function $g(x) = G(x, e^{h(x)})$, given by a polynomial $G\in\Z[X,Y]$,  we have that
$g'(x) = \widetilde G (x, e^{h(x)}) $
is given by the polynomial $\widetilde G:= \dfrac{\partial G}{\partial X} + h'(X) Y \dfrac{\partial G}{\partial Y}$.
If $\deg_X(G) = d_X$, $\deg_Y(G) =d_Y$ and $\deg(h) =\delta$, we have that
$$\deg_X(\widetilde G) \le \delta - 1+d_X  , \quad \deg_Y(\widetilde G) = d_Y $$
$$ H(\widetilde G) \le  H(G) (d_X + d_Y \delta^2 H(h))$$
Applying these bounds recursively, we get that the successive derivatives of $g$ can be obtained as
$$g^{(\nu)}(x) = {}^\nu\widetilde G (x, e^{h(x)})$$
for polynomials  ${}^\nu\widetilde G\in \Z[X,Y]$ such that
$$\deg_X({}^\nu\widetilde G) \le \nu(\delta - 1)+d_X , \quad \deg_Y({}^\nu\widetilde G) = d_Y $$
$$ H({}^\nu\widetilde G) \le  H(G) \prod_{j=0}^{\nu-1}(j(\delta - 1) +d_X+  d_Y \delta^2 H(h)).$$
\end{remark}

Now, we can state the main result of this section.

\begin{theorem}\label{thm:algcount}
Let $f(x) = F(x, e^{h(x)})$ be an $E$-polynomial defined by $F\in \Z[X,Y]$ and $h\in \Z[X]$ with
$\deg(F), \deg(h) \le d$ and $H(F), H(h) \le H$, and let $[a, b]$ be a closed interval. Assume that ${\rm{Res}}_Y(F, \widetilde F) \ne 0$. There is an algorithm that computes the number of zeros of $f$ in $[a, b]$ within complexity $(2dH)^{O(d^6)}$.
\end{theorem}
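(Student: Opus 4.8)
The strategy is to run Algorithm \texttt{ZeroCounting} for $f(x)=F(x,e^{h(x)})$, replacing every oracle call in Steps~5 and~6 by an invocation of Algorithm \texttt{E-SignDetermination}, and to keep careful track of the degrees and heights of all intermediate polynomials so that each subroutine is called on inputs of controlled size. First I would apply the complexity bound already established for Algorithm \texttt{ZeroCounting}: with $d_X=\deg_X(F)\le d$, $d_Y=\deg_Y(F)\le d$, $\delta_X=\deg_X(\Phi)\le d-1$, $\delta_Y=\deg_Y(\Phi)=d_Y\le d$ (recall that for an $E$-polynomial $\Phi(X,Y)=h'(X)Y$), the oracle-free part of the algorithm (Steps~1--4) runs within $d^{O(1)}$ arithmetic operations, and the number of oracle calls in Steps~5 and~6 is $d^{O(1)}$. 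So the whole bound reduces to estimating the cost of a single call to \texttt{E-SignDetermination} on the polynomials that actually arise.

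Next I would bound the size of those polynomials. In Step~5 the relevant polynomial is $F$ itself, of degree $\le d$ and height $\le H$. In Step~6 the relevant polynomials are the $F_{I_j,i}$, i.e.\ the subresultants $R_i$ of $F$ and $F_1$, together with the $\tau_i,\rho_i$; by the complexity analysis of Step~2 these have degree in $X$ bounded by a polynomial in $d$ (namely $O(d^2)$ after taking into account $\deg_X(F_1)\le(\delta_Y+2)d_X+\delta_X=O(d^2)$), degree in $Y$ bounded by $d$, and—this is the point that needs a short argument—height bounded by $(2dH)^{d^{O(1)}}$, since a subresultant is a determinant of a matrix of size $O(d)$ whose entries are coefficients of $F$ and $F_1$, and Hadamard's inequality together with the height bound $H(F_1)\le (2dH)^{O(1)}$ gives the claim. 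Moreover, in Step~6 one must feed \texttt{E-SignDetermination} not the $F_{I_j,i}$ alone but their successive derivatives ${}^\nu\widetilde{G}$ up to order $\nu=O(d^2)$ (the multiplicity bound from Lemma~\ref{multiplicity}); by Remark~\ref{cotas} these still have degree $d^{O(1)}$ and height $(2dH)^{d^{O(1)}}$. Thus in every call we have parameters $d'=d^{O(1)}$ and $H'=(2dH)^{d^{O(1)}}$.

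Finally I would plug these into the complexity statement for \texttt{E-SignDetermination}: the cost of one call with inputs of degree $\le d'$ and height $\le H'$ and $t$ Thom encodings is $O\big(t\,8^{d'} (d')^{3d'+8} (H')^{2d'}\big)$, which with $d'=d^{O(1)}$ and $H'=(2dH)^{d^{O(1)}}$ is $(2dH)^{d^{O(1)}}$; the factor $t$ and the number of distinct calls are both $d^{O(1)}$, and $\#\{j:G(\alpha_j,e^{h(\alpha_j)})=0\}$ is obtained within $(d')^3\log^3(d')=d^{O(1)}$ operations. Adding the $d^{O(1)}$ operations of Steps~1--4 and the $(2dH)^{d^{O(1)}}$ total cost of the sign determinations yields the overall bound $(2dH)^{O(d^6)}$. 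The step I expect to be the main obstacle is the height bookkeeping: one must check that raising $\mathrm{lc}(F)$ to the power $D=O(d)$ in the definition of $F_1$, then forming $O(d)$ nested subresultants, then differentiating $O(d^2)$ times, compounds the height only to $(2dH)^{d^{O(1)}}$ and not worse—each operation multiplies the height by at most a $(2dH)^{d^{O(1)}}$ factor and introduces at most a factorial $((\ell+d_X)!$-type$)$ term, all of which are absorbed into the exponent $d^{O(1)}$, but this needs to be verified link by link so that the exponent of $2dH$ in the final bound is genuinely $O(d^6)$ rather than something larger.
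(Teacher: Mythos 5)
Your plan is the same as the paper's: run Algorithm \texttt{ZeroCounting} on $f$, replace the oracle calls in Steps~5 and~6 by Algorithm \texttt{E-SignDetermination}, and track the degrees and heights of the subresultants $R_i$, the polynomial $L$, and the iterated derivatives ${}^\nu\widetilde{R}_i$ so that each call to \texttt{E-SignDetermination} costs $(2dH)^{d^{O(1)}}$.

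Two remarks on the details. First, you write $\delta_Y=\deg_Y(\Phi)=d_Y\le d$ and parenthetically recall that $\Phi(X,Y)=h'(X)Y$; these two statements are inconsistent, since $\deg_Y\bigl(h'(X)Y\bigr)=1$. Using $\delta_Y=1$ (and $\delta_X=\deg(h)-1\le d-1$) is what keeps the degrees small: one gets $D=2$, $\deg_X(F_1)\le 4d-1=O(d)$, $\deg_X(R_i)=O(d^2)$, $\deg(L)=O(d^3)$, multiplicity bound $O(d^3)$, and $\deg_X({}^\nu\widetilde{R}_i)=O(d^4)$. If you instead carry $\delta_Y=d$ through, $D=O(d)$, $\deg_X(F_1)=O(d^2)$, each subsequent object picks up an extra factor of $d$, and the resulting exponent lands strictly above $O(d^6)$; so the slip is not harmless for the precise exponent claimed. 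Second, the part you flag as "the main obstacle" — verifying link by link that the heights compound only to $(2dH)^{d^{O(1)}}$ with exponent $O(d^6)$ — is exactly the content of the paper's proof: it bounds $H(F_1)\le 8(d+1)d^4H^4$, then $H(\mathrm{SRes}_k)\le 3^{2d-1}2^{5d-2}d^{9d-3}H^{5d-1}$ via the determinantal formula, then $H(L)\le(2dH)^{O(d^2)}$, then $H({}^\nu\widetilde{R}_i)\le H(R_i)\bigl(10d^4+(H-5)d^3\bigr)^{10d^3-3d^2}$ via Remark~\ref{cotas}, and finally shows that the parameter $\mathcal{H}$ fed into \texttt{E-SignDetermination} satisfies $\mathcal{H}=(2dH)^{O(d^6)}$, the dominant contribution coming from $H(L)^{\deg_X({}^\nu\widetilde{R}_i)}=(2dH)^{O(d^2)\cdot O(d^4)}$. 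Your proposal correctly identifies where that work lives but does not carry it out; with the $\delta_Y$ correction made, the computation goes through exactly as sketched.
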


\begin{proof}{Proof.}
In order to prove the theorem, we adapt Algorithm \texttt{ZeroCounting} introduced in Section \ref{sec:generalalgorithm} to count the number of zeros of an $E$-polynomial with no call to oracles.
It suffices to show how to perform Steps 5 and 6 of the algorithm and estimate the complexity of the procedure.

Step 5 can be achieved by means of Steps 1 and 2 of Algorithm \texttt{E-SignDetermination}. As in this case $\deg(L)\le 10d^3$, the complexity is of order $O(d^9 \log^3(d))$.

To achieve Step 6 of the algorithm, we apply the algorithm \texttt{E-SignDetermination}
to the polynomials defining the functions $f_{I_j, i}$ and their successive derivatives, for $0\le i \le N$.
These functions are defined, up to signs, by the polynomials $R_i$ introduced in Notation \ref{not:subres}, and ${}^\nu\widetilde R_i$, $0\le i \le N,\ \nu\in \mathbb{N}$.

Since  $\deg_Y(\widetilde F)=\deg_Y(F)$, then
$F_1= {\rm lc}(F)^2 . \widetilde F - {\rm lc}(\widetilde F) {\rm lc}(F)F$ and so, $\deg_X(F_1)\le 4d-1$ and
$H(F_1) \le 4 d (d+1) H^3 (d+d^3 H)\le 8(d+1) d^4 H^4$.
Taking into account the determinantal formula for the subresultants, it follows that for every $k$,
$\deg_X({\rm SRes}_k) \le 5d^2-2d$ and
$H({\rm SRes}_k) \le (2d-1)! 2^{5d-2} (d+1)^{2d-2}  d^{5d-1} H^{5d-1}\le
3^{2d-1} 2^{5d-2} d^{9d-3} H^{5d-1}$, which are therefore, upper bounds for $\deg_X(R_{i})$ and
$H(R_{i})$ for all $i$.
Finally, recalling that $L$ is the product of at most $2d$ polynomials of degrees at most $5d^2-2d$ that are coefficients of the subresultants ${\rm SRes}_k$, we have that
$H(L) \le  (5d^2)^{2d-1} (3^{2d-1} 2^{5d-2} d^{9d-3} H^{5d-1})^{2d}
  \le  3^{4d^2} 2^{10d^2-2d} d^{18 d^2-2d-2} H^{10d^2 - 2d}$.

Taking into account the bound for the multiplicity of a zero of a Pfaffian function from
Lemma \ref{multiplicity}, we will apply the algorithm \texttt{E-SignDetermination} to the polynomials $R_i$  $(0\le i \le N)$ and ${}^\nu\widetilde R_i$ for $\nu\le 10d^3-3d^2$,
to determine the signs of the corresponding Pfaffian functions at the zeros of $L$.
The bounds from Remark \ref{cotas} applied to the polynomials $R_i$  imply that, for
$\nu\le 10d^3-3d^2$,
$$\deg_X({}^\nu\widetilde R_i) \le (10 d^3 -3d^2)(d-1) + 5d^2-2d\le 10 d^4-5d^3$$
$$ H({}^\nu\widetilde R_i) \le H(R_i) (10d^4 + (H-5) d^3)^{10d^3 -3d^2}$$
Then, the complexity of applying the algorithm to each of these polynomials is of order
$$O(d^{19} (\mathcal{H} + d^{24}(d^4 +\log \mathcal{H})^2))$$
where
$$\mathcal{H} \le  (10d^4+5d^3)! H(L)^{10d^4-5d^3} ((d+1)3^{2d-1} 2^{5d-2} d^{9d-3} H^{5d-1}(10d^4 + (H-5) d^3)^{10d^3 -3d^2})^{10d^3} $$
$$ = (2dH)^{O(d^6)}.$$

This sign computation is done for at most $d(10d^3-3d^2)$ polynomials. Finally, for each interval
$I_j$, the signs $\sg(f_{I_j, i}, \alpha_j^+)$ and $\sg(f_{I_j, i}, \alpha_{j+1}^-)$ are obtained easily
following Definition \ref{def:signseq}.

Therefore, the overall complexity of the algorithm is of order $$(2dH)^{O(d^6)}.$$
\end{proof}

\bigskip
The previous procedure can be slightly modified to count algorithmically the total number of real zeros of
an $E$-polynomial. To do this, we consider the signs of $E$-polynomials at $+\infty$ and $-\infty$.

Let $g(x) = G(x, e^{h(x)}) $ be an $E$-polynomial. Assume $G(X,Y) = \sum_{j=0}^{d_Y} a_j(X) Y^j$ with
$a_{d_Y}\ne 0$ and let $j_0 = \min\{ j : a_j \ne 0\}$. We define
\[ \sg(g, +\infty) = \begin{cases}
{\rm sign} ({\rm lc}(a_{j_0})) & {\rm if } \ {\rm lc}(h)< 0\\
{\rm sign} ({\rm lc}(a_{d_Y})) & {\rm if } \ {\rm lc}(h)> 0
\end{cases}\]
and
\[ \sg(g, -\infty) = \begin{cases}
{\rm sign} ((-1)^{\deg(a_{j_0})}{\rm lc}(a_{j_0})) & {\rm if } \ (-1)^{\deg(h)}{\rm lc}(h)< 0\\
{\rm sign} ((-1)^{\deg(a_{d_Y})}{\rm lc}(a_{d_Y})) & {\rm if } \ (-1)^{\deg(h)}{\rm lc}(h)> 0
\end{cases}\]

For a sequence of $E$-polynomials $\mathbf{f}= (f_0,\dots, f_N)$, we write $v(\mathbf{f}, +\infty)$ for the number of variations in sign  in $(\sg(f_0, +\infty), \dots, \sg(f_N, +\infty))$ and $v(\mathbf{f}, -\infty)$ for the number of variations in sign in $(\sg(f_0, -\infty), \dots, \sg(f_N, -\infty))$.

\begin{remark}
Following Notation \ref{not:subres} and Definition \ref{def:signseq},
let $\mathbf{f}_{I_{+\infty}}$ and $\mathbf{f}_{I_{-\infty}}$ be Sturm sequences for $f(x) = F(x, e^{h(x)})$ in the intervals $I_{+\infty} = (M, +\infty)$ and $I_{-\infty}= (-\infty, -M)$ where $M$ is an upper bound for the absolute values of the
roots of $\tau_i$ for $i=0,\dots, N$ and $\rho_i$ for $i=2,\dots, N+1$.

Then, the number of zeros of $f$ in $I_{+\infty}$ equals $v(\mathbf{f}, M^{+})-v(\mathbf{f}, +\infty)$ and the number of zeros of $f$ in $I_{-\infty}$  equals $v(\mathbf{f}, -\infty)-v(\mathbf{f}, -M^{-})$.
\end{remark}

By applying this remark, we conclude that the total number of zeros of an $E$-polynomial in $\mathbb{R}$
can be determined within the same complexity order as in Theorem \ref{thm:algcount}.

\begin{remark}
The assumption $\textrm{Res}_{Y}(F, \widetilde F) \ne 0$ in Theorem  \ref{thm:algcount} can be removed by using the construction in the proof of Lemma \ref{lem:resP}. Taking into account the increase of height and degree, it follows that the overall complexity of the root counting algorithm is of order $(2dH)^{d^{O(1)}}$ as stated in Theorem \ref{mainThm}.
\end{remark}

\subsection{Bound for the size of roots}

The following proposition provides an interval which contains all the zeros of an $E$-polynomial and
whose endpoints are determined by the degrees and heights of the polynomials involved in its definition. Using this bound, applying successively our algorithm for zero counting,
it is possible to separate and approximate the roots of an $E$-polynomial.

\begin{proposition}
Let $f(x) = F(x, e^{h(x)})$ be an $E$-polynomial defined by
$F\in \Z[X,Y]$ and $h\in \Z[X]$ such that $\deg(F)\le d $, $\deg (h) = \delta >0$ and
$H(F), H(h) \le H$.
Then, for every zero $\alpha \in \R$ of $f$, we have that $|\alpha | \le M(d, \delta, H):= 1+
(d+1) H^2 \max \{(d+1) (1+2H^2), \ 2 \lfloor \frac{2d}{\delta}+1\rfloor !  \}$.
\end{proposition}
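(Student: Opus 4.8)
The plan is to write $f(x) = F(x, e^{h(x)}) = \sum_{j=0}^{d_Y} a_j(X)\, e^{j h(X)}$ with $a_j \in \Z[X]$ (where $d_Y = \deg_Y(F)$), and to bound separately the zeros coming from two regimes: (i) the region where $e^{h(x)}$ is ``moderate'', so that $f$ behaves like a polynomial in $x$ and $e^{h(x)}$ of controlled height, and (ii) the region where $|h(x)|$ is large, where one exponential term dominates all the others and $f$ cannot vanish. The quantity $M(d,\delta,H)$ in the statement is a maximum of two terms, and I expect each of these two terms to control one of the two regimes.

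First I would handle regime (ii). Suppose $|\alpha|$ is large enough that $h$ is already past its last real critical point and $|h(\alpha)|$ is large; then $e^{h(\alpha)}$ is either very large or very close to $0$. If ${\rm lc}(h) h(\alpha) > 0$ (so $e^{h(\alpha)}$ is huge), the top term $a_{d_Y}(\alpha) e^{d_Y h(\alpha)}$ dominates the sum $\sum_{j<d_Y} a_j(\alpha) e^{j h(\alpha)}$ in absolute value as soon as $e^{h(\alpha)} > d_Y \max_{j} |a_j(\alpha)/a_{d_Y}(\alpha)|$, forcing $f(\alpha) \ne 0$; symmetrically, if $e^{h(\alpha)}$ is tiny the lowest nonvanishing term $a_{j_0}(\alpha) e^{j_0 h(\alpha)}$ dominates. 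To make this quantitative I need a lower bound on $|h(\alpha)|$ in terms of $|\alpha|$ once $|\alpha|$ exceeds the largest root of $h$: writing $h(X) = {\rm lc}(h)\prod(X-\beta_i)$ and using that each $|\beta_i| < 1 + H$ by Lemma~\ref{sizeofroots}, I get $|h(\alpha)| \ge (|\alpha| - 1 - H)^{\delta}$, so a linear-in-$H$ (times a factorial-type) threshold on $|\alpha|$ already makes $e^{h(\alpha)}$ astronomically larger than any ratio of the $a_j$'s evaluated at $\alpha$, whose sizes are themselves polynomial in $|\alpha|$ and $H$. This is where the term $2\lfloor \tfrac{2d}{\delta}+1\rfloor!$ (a bound on $d_Y + 1$, since $d_Y \le \deg_Y(F)$ and $\deg_Y(F)\cdot\delta$ interacts with $\deg(F) \le d$ only loosely — more carefully, $d_Y \le d$, but the factorial seems to come from bounding $d_Y!$ type constants) should enter.

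Next I would handle regime (i): zeros $\alpha$ with $|\alpha|$ below the regime-(ii) threshold but possibly still large. Here the idea is that on any interval avoiding the critical points of $h$, $e^{h(x)}$ ranges over values bounded by $e^{|h(\alpha)|}$ which is still only singly-exponential, so $f(\alpha)=0$ gives a genuine algebraic relation. Concretely, if $\alpha$ is a zero, then $e^{h(\alpha)}$ is a root of the polynomial $\sum_j a_j(\alpha) Y^j \in \R[Y]$ (assuming this polynomial is not identically zero, which can be arranged — if $a_j(\alpha)=0$ for all $j$, then $\alpha$ is a common root of all $a_j$, hence $|\alpha| < 1+H$ and we are done), so by Lemma~\ref{sizeofroots}(i), $e^{h(\alpha)} < 1 + \max_j |a_j(\alpha)/a_{d_Y}(\alpha)|$ and also $e^{h(\alpha)} > (1 + \max_j|a_j(\alpha)/a_{j_0}(\alpha)|)^{-1}$. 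Combining these with $|h(\alpha)| \ge (|\alpha|-1-H)^{\delta}$ when $|\alpha|$ is past the roots of $h$, and with crude bounds $|a_j(\alpha)| \le (d+1) H (1+|\alpha|)^d$ and $|a_{d_Y}(\alpha)| \ge$ (a lower bound once $|\alpha|$ passes the roots of $a_{d_Y}$), one gets $(|\alpha|-1-H)^{\delta} \le \log(\text{something polynomial in } |\alpha|, H)$, which already forces $|\alpha|$ to be at most roughly $1 + (d+1)H^2(1+2H^2)(d+1)$ — matching the first term in $M(d,\delta,H)$.

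The main obstacle, I expect, is the bookkeeping needed to get a \emph{lower} bound on $|a_{d_Y}(\alpha)|$ (and $|a_{j_0}(\alpha)|$) that is uniform and clean: $a_{d_Y}$ is a fixed integer polynomial, so once $|\alpha|$ exceeds $1 + H(a_{d_Y}) \le 1+H$ one has $|a_{d_Y}(\alpha)| \ge 1$ (it is a nonzero integer... no — $\alpha$ need not be an integer, so instead one uses $|a_{d_Y}(\alpha)| \ge ||{\rm lc}(a_{d_Y})|\,\prod|\alpha - \gamma_i||$ and bounds $|\alpha-\gamma_i| \ge |\alpha| - 1 - H$). Threading all these elementary estimates together so that the final constant is exactly the stated $M(d,\delta,H)$ — in particular justifying the appearance of the floor-factorial term, which presumably comes from bounding $d_Y$ via the relation between $\deg(F)$, $\deg_X$ and $\deg_Y$, or from a Taylor-remainder estimate for $e^{h(\alpha)}$ — is the delicate part; the conceptual content (dominant-term argument far out, root-size bound for the auxiliary univariate polynomial in between) is routine.
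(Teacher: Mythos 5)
Your overall strategy is the right one, and it matches the paper's: you observe that (away from zeros of the leading and trailing coefficients, which already give $|\alpha|<1+H$) the number $e^{h(\alpha)}$ is a root of $F(\alpha,Y)$ and $e^{-h(\alpha)}$ a root of its reciprocal, and you compare the resulting Cauchy-type bound with the growth of the exponential. Up to there the proposal is sound. But two of your specific explanations are wrong, and they are exactly the parts that produce the two terms inside the $\max$.

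First, the factorial $\lfloor 2d/\delta+1\rfloor!$ does \emph{not} come from ``bounding $d_Y!$-type constants''; $d_Y$ plays no role there. It comes from replacing $e^{2h(x)}$ by $e^{x^\delta}$ past the (small) bound $r(2h-X^\delta)<1+2H$, and then truncating the Taylor series: one uses $e^{x^\delta}>\sum_{k=0}^{K}\frac{x^{\delta k}}{k!}$ with $K=\lfloor d_0/\delta+1\rfloor$, $d_0=\deg p-\deg q\le 2d$, and compares this truncation against the polynomial upper bound $2Cx^{d_0}$ for $p(x)/q(x)$. Bounding the roots of $\sum_{k\le K}X^{\delta k}/k! - 2CX^{d_0}$ (whose leading coefficient is $1/K!$) is what introduces $\lfloor 2d/\delta+1\rfloor!$. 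Second, the deduction you propose in ``regime (i)'' does not lead where you claim: from $(|\alpha|-1-H)^\delta \le \log(\text{something polynomial in }|\alpha|,H)$ one would get a bound of the shape $|\alpha|\le 1+H+(\log(\cdots))^{1/\delta}$, not anything resembling $1+(d+1)^2H^2(1+2H^2)$. The paper never estimates $h(\alpha)$ by $(|\alpha|-1-H)^\delta$ and never takes logarithms of the rational bound; instead it stays entirely at the polynomial level. The first term $(d+1)(1+2H^2)$ in the $\max$ is simply the Cauchy root bound for the integer polynomial $p - 2CX^{d_0}q$ (and its analogue $p-(C+1)q$ in the equal-degrees case), where $p=\sum_j a_j^2$, $q=a_{d_Y}^2$ and $C$ is the leading-coefficient ratio.

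So the conceptual skeleton is right, but your two-regime decomposition is not how the bound is actually produced, your attribution of the factorial is incorrect, and the ``regime (i)'' estimate would yield a bound of a different (and incompatible) shape. The missing idea is: stay purely polynomial — squeeze $p/q$ below a monomial $2Cx^{d_0}$, squeeze $e^{2h}$ above the partial sum $\sum_{k\le K}x^{\delta k}/k!$, and read off all the thresholds as Cauchy root bounds of explicit integer polynomials, one of which carries the $1/K!$ that becomes the factorial.
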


\begin{proof}{Proof.}
 Let $F(X,Y) = \sum_{j=0}^{d_Y} a_j(X) Y^j\in \Z[X,Y]$ with
 $\deg(a_j) \le d_X$ for every $0\le j \le d_Y$ and
$a_{d_Y} \ne 0$.

Let $\alpha\in \R$ be a zero of $f$.  If $a_{d_Y} (\alpha) = 0$, then $|\alpha|\le r(a_{d_Y})< 1+H$ (see Lemma \ref{sizeofroots})
and so, the bound in the statement holds. Similarly, if $a_0(\alpha) = 0$, the bound holds.

Assume now that $a_{d_Y}(\alpha) \ne 0$ and $a_0(\alpha) \ne 0$. Then $e^{h(\alpha)} $ is a root of $F(\alpha, Y)\in \R[Y]$ and $e^{-h(\alpha)}$ is a root of $Y^{d_Y} F(\alpha, Y^{-1})\in \R[Y]$.
By Lemma \ref{sizeofroots}, it follows that
$$e^{2 h(\alpha)} <  1+ \sum_{0\le j \le d_Y-1} \left( \dfrac{a_j(\alpha)}{a_{d_Y}(\alpha)}\right)^2 \quad \hbox{ and } \quad e^{-2 h(\alpha)} <  1+ \sum_{1\le j \le d_Y} \left( \dfrac{a_j(\alpha)}{a_0(\alpha)}\right)^2 . $$

We are going to prove that, for $\alpha >M(d, \delta, H)$, one of the previous inequalities fails to hold.

Note that in both cases, the right hand side of the inequality is given by a rational function,
$$\dfrac{\sum_{0\le j \le d_Y} a_j(X)^2}{a_{d_Y}(X)^2} \qquad \hbox{ and } \qquad \dfrac{\sum_{0\le j \le d_Y} a_j(X)^2}{a_{0}(X)^2}$$
respectively, where the numerator and the denominator are integer polynomials of degrees at most $2d_X$ and coefficients of size bounded by $(d_Y+1) (d_X+1) H(F)^2$ and $(d_X+1) H(F)^2$ respectively. Moreover, the degree of the denominator is less than or equal to the degree of the numerator.

First, assume that the leading coefficient of $h$ is positive.

Let  $p(X) = \sum_{0\le j \le d_Y} a_j^2(X)$ and $q(X) = a_{d_Y}^2(X)$ so that $\frac{p(X)}{q(X)} =  1+ \sum_{0\le j \le d_Y-1} \left( \frac{a_j(X)}{a_{d_Y}(X)}\right)^2$.
and let $C>0$ be the quotient of the leading coefficients of $p$ and $q$. Note that $|C|\le (d_Y+1) H(F)^2$.

If $\deg(p) = \deg(q)$, for every $x> \max\{ r(q), r(p-(C+1)q)\}$, we have that $\dfrac{p(x)}{q(x)} < C+1$. On the other hand, for $x> r(2h-\ln(C+1))$, we have that $e^{2h(x)} >C+1$. We conclude that, for $x> \max \{ r(q), r(p-(C+1)q), r(2h-\ln(C+1)\}$, the inequality $e^{2h(x)}> \dfrac{p(x)}{q(x)}$ holds.

If $\deg(p) > \deg(q)$, let $d_0:=\deg(p)- \deg(q)$. For $x>\max\{ r(q), r(p-2C x^{d_0} q)\}$, we have that $\dfrac{p(x)}{q(x)}< 2C x^{d_0}$.
Note that $e^{2h(x)}> e^{x^\delta}$ for $x> r(2h-X^\delta)$. As $e^{x^\delta} > \sum_{k=0}^{\lfloor \frac{d_0}{\delta}+1\rfloor} \dfrac{1}{k!} x^{\delta  k} > 2Cx^{d_{0}}$ for $x> r( \sum_{k=0}^{\lfloor \frac{d_0}{\delta}+1\rfloor} \dfrac{1}{k!} X^{\delta  k} - 2C X^{d_0})$, it follows that $\dfrac{p(x)}{q(x)}< e^{2h(x)}  $ for $x> \max \{ r(q), r(p-2C x^{d_0} q),r( \sum_{k=0}^{\lfloor \frac{d_0}{\delta}+1\rfloor} \dfrac{1}{k!} X^{\delta  k} - 2C X^{d_0})\}$.
Using again Lemma \ref{sizeofroots}, we obtain:
\begin{itemize}
\item $r(q) < 1+(d_X+1) H(F)^2$
\item $r(p-(C+1)q) ) < 1+ (d_X+1)H(F)^2( d_Y+ (d_Y+1)H(F)^2)$
\item $r(2h - \ln(C+1)) < 1+ H(h) + \dfrac{1}{2} \ln( (d_Y+1)H(F)^2 +1)$
\item $r(p- 2C X^{d_0} q) < 1+ (d_X+1) (d_Y+1) H(F)^2 (1+2H(F)^2)$
\item $r(2h - X^\delta) < 1+ 2H(h)$
\item $r\Big(\sum_{k=0}^{\lfloor \frac{d_0}{\delta}+1\rfloor} \dfrac{1}{k!} X^{\delta  k} - 2C X^{d_0}\Big)< 1+ 2 \lfloor \frac{2d_X}{\delta}+1\rfloor !  (d_Y+1) H(F)^2$
\end{itemize}
and, therefore, we conclude that, for $\alpha >M(d,\delta, H)$, the following inequality holds
$$e^{2 h(\alpha)} >  1+ \sum_{0\le j \le d_Y-1} \left( \dfrac{a_j(\alpha)}{a_{d_Y}(\alpha)}\right)^2.$$

If the leading coefficient of $h$ is negative, applying the previous argument to $-h$, we have that, for $\alpha > M(d,\delta, H)$, the following  inequality holds
$$e^{-2h(\alpha)} > 1+ \sum_{1\le j \le d_Y} \left( \dfrac{a_j(\alpha)}{a_0(\alpha)}\right)^2.  $$

Finally, noticing that $\alpha$ is a zero of $F(x, e^{h(x)})$ if and only if $-\alpha$ is
a zero of $F(-x, e^{h(-x)})$  we conclude that every zero $\alpha$ of $f$ satisfies $\alpha \ge - M(d,\delta, H)$.
\end{proof}

\bigskip

\noindent \textbf{Acknowledgements.}  The authors wish to thank the referees for their detailed reading and helpful comments.

\end{document}